\theoremstyle{plain}
\newtheorem{definition}{Definition}[section]
\newtheorem{theorem}[definition]{Theorem}
\newtheorem*{theorem*}{Theorem}
\newtheorem{remark}[definition]{Remark}
\newtheorem*{remark*}{Remark}
\newtheorem*{sideremark*}{Side Remark}\newtheorem*{mt*}{Main Theorem}
\newtheorem*{claim*}{Claim}
\newtheorem*{q*}{Question}
\newtheorem{lemma}[definition]{Lemma}
\newtheorem{q}[definition]{Question}
\newtheorem{corollary}[definition]{Corollary}
\newtheorem*{corollary*}{Corollary}
\newtheorem*{proposition*}{Proposition}
\newcommand{\R}{\mathbb{R}}
\newcommand{\na}{\nabla}
\newcommand{\dd}{{\rm d}}
\newcommand{\p}{\partial}
\newcommand{\e}{\varepsilon}
\newcommand{\emb}{\hookrightarrow}
\newcommand{\weak}{\rightharpoonup}
\newcommand{\map}{\rightarrow}
\newcommand{\G}{\Gamma}
\newcommand{\M}{\mathcal{M}}
\newcommand{\1}{\mathbbm{1}}
\newcommand{\two}{{\rm II}}
\newcommand{\loc}{{\rm loc}}
\newcommand{\ttt}{{\mathfrak{t}}}
\newcommand{\nnn}{{\mathfrak{n}}}
\newcommand{\X}{{\mathscr{X}}}
\newcommand{\ball}{{\bf B}}
\def\XXint#1#2#3{{\setbox0=\hbox{$#1{#2#3}{\int}$ }
\vcenter{\hbox{$#2#3$ }}\kern-.6\wd0}}
\newcommand{\so}{\mathfrak{so}}
\newcommand{\gl}{\mathfrak{gl}}
\newcommand{\g}{{\mathfrak{g}}}
\title{On the fundamental theorem of submanifold theory and isometric immersions with supercritical low regularity}
\author{Siran Li}
\address{Siran Li: School of Mathematical Sciences $\&$ CMA-Shanghai, Shanghai Jiao Tong University, No.~6 Science Buildings,
800 Dongchuan Road, Minhang District, Shanghai, China (200240)}
\email{\texttt{siran.li@sjtu.edu.cn}}
\author{Xiangxiang Su}
\address{Xiangxiang Su: School of Mathematical Sciences, Shanghai Jiao Tong University, No.~6 Science Buildings,
800 Dongchuan Road, Minhang District, Shanghai, China (200240)}
\email{\texttt{sjtusxx@sjtu.edu.cn}}
\keywords{Isometric immersion; Gauss--Codazzi--Ricci equations; fundamental theorem of surface theory; Uhlenbeck gauge; compensated compactness; weak compactness.}
\subjclass[2020]{58D10; 53A07; 	58E20; 53C42; 53C21; 58J60; 	70S15} 
\date{\today}
\begin{document}

\maketitle

\begin{abstract}
A fundamental result in global analysis and nonlinear elasticity asserts that given a solution $\mathfrak{S}$ to the Gauss--Codazzi--Ricci equations over a simply-connected closed  manifold $(\M^n,g)$, one may find an isometric immersion $\iota$ of $(\M^n,g)$ into the Euclidean space $\R^{n+k}$ whose extrinsic geometry coincides with $\mathfrak{S}$. Here the dimension $n$ and the codimension $k$ are arbitrary.  Abundant literature has been devoted to relaxing the regularity assumptions on $\mathfrak{S}$ and $\iota$. The best result up to date is $\mathfrak{S} \in L^p$ and $\iota \in W^{2,p}$ for $p>n \geq 3$ or $p=n=2$. 

In this paper, we extend the above result to $\iota \in \mathcal{X}$ whose topology is strictly weaker than $W^{2,n}$ for $n \geq 3$. Indeed, $\mathcal{X}$ is the weak Morrey space $L^{p, n-p}_{2,w}$ with arbitrary $p \in ]2,n]$. This appears to be first supercritical result in the literature on the existence of isometric immersions with low regularity, given the solubility of the Gauss--Codazzi--Ricci equations. Our proof essentially utilises the theory of Uhlenbeck gauges --- in particular, Rivi\`{e}re--Struwe's work \cite{rs} on harmonic maps in arbitrary dimensions and codimensions --- and  compensated compactness. 

\end{abstract}

\section{Introduction}\label{sec: intro}

\subsection{Submanifold theory} 

The \emph{fundamental theorem of surface theory}, a classical result in differential geometry and global analysis, asserts that given a solution $\two$ (a $2 \times 2$ symmetric matrix field on $\M$) to the Gauss--Codazzi equations on any simply-connected closed Riemannian manifold $(\M,g)$, one may construct an isometric immersion $\iota: (\M,g) \to \R^3$ whose second fundamental form is precisely $\two$. The Gauss--Codazzi equations constitute necessary conditions for the existence of isometric immersions, while the fundamental theorem of surface theory  in turn ascertains the sufficiency. Proofs of the local version of the theorem can be found in  Choquet-Bruhat, Dewitt-Morette, and Dillard-Bleick \cite[p. 303]{add} and  Malliavin \cite[p. 133]{add'}.

Mathematical investigations for the fundamental theorem of surface theory have been largely motivated by nonlinear elasticity. One of the major objectives of elasticity theory is to determine the \emph{deformation} undergone by  elastic bodies in response to external forces and boundary conditions. In geometric terms, an elastic body is modelled by a 2- or 3-dimensional manifold $\M$, and the deformation is an isometric immersion $\iota: \M \emb \R^3$. The intrinsic approach to nonlinear elasticity, pioneered by Antman \cite{antman} and Ciarlet \cite{cbook, cbook2}, recasts the problems concerning the deformation $\iota$ to those concerning the Cauchy--Green tensor, which is the pullback Riemannian metric $\iota^\#\delta$ on $\M$ (where $\delta$ denotes the Euclidean metric throughout).

The generalisation of the fundamental theorem of surface theory to higher dimensions and codimensions, termed as the \emph{fundamental theorem of submanifold theory}, has also long been a folklore theorem. It ascertains that given a solution $\mathfrak{S}$ to the Gauss--Codazzi--Ricci equations on a simply-connected closed Riemannian manifold $(\M^n,g)$, one can find an isometric immersion $\iota: (\M^n,g) \to (\R^{n+k},\delta)$ whose extrinsic geometry coincides with $\mathfrak{S}$. Here and hereafter
\begin{align}\label{frakS, new}
\mathfrak{S} = \left(\two, \na^\perp\right),
\end{align}
where $\two$ is the second fundamental form and $\na^\perp$ is the normal connection, \emph{i.e.}, the orthogonal projection of the Levi-Civita connection on $\R^{n+k}$ to the normal directions of $\iota(\M)$. See Tenenblat \cite{ten} for a proof of the fundamental theorem of submanifold theory in the case that $\mathfrak{S}, \iota \in C^\infty$.

Lasting endeavours have been devoted over the past decades to lowering the regularity assumptions. Denote the regularity class of $\iota$ as $\X$. As aforementioned, given a solution $\mathfrak{S}$ to the Gauss--Codazzi--Ricci equations on the simply-connected closed manifold $(\M,g)$, Tenenblat \cite{ten} constructed an isometric immersion $\iota \in \X = C^\infty$. Arguments in \cite{hw} by Hartman--Wintner led  in effect to the existence of $\iota \in \X = C^3$ given $\mathfrak{S}\in C^1$. See 
also Ciarlet--Larsonneur \cite{ciarlet'} and the elasticity work by Blume \cite{b} for a careful treatment of the case $n=2$, $k=1$. In 2003, S. Mardare \cite{m1} extended the fundamental theorem of surface theory to $\X = W^{2,\infty}$ given $\mathfrak{S} \in L^\infty$. Then, in 2005, the same author \cite{m2} further settled the case $\X = W^{2,p}$ given $\mathfrak{S} \in L^p$ for arbitrary $p>2$. The higher dimensional and codimensional analogue of \cite{m2} was obtained in 2008 by Szopos \cite{sz} (see also S. Mardare \cite{cl, li-new, m3} for geometrically oriented variants of the arguments); that is, for any $n \geq 2$ one can find an isometric immersion of $(\M^n,g)$ into $\R^{n+k}$ in regularity class $\X = W^{2,p}$,  given $\mathfrak{S} \in L^p$ for arbitrary $p>n$.

\subsection{The critical $W^{2,n}$-isometric immersions}\label{subsec: intro, pfaff}
In view of the discussions above, it is natural to ask about the critical case of \cite{m2, sz}, which remains open up to date except for  $n=2$:
\begin{q}\label{q, fund}
Given a solution $\mathfrak{S} \in L^n$ on a simply-connected closed manifold $(\M^n,g)$, does there exist an isometric immersion $\iota\in W^{2,n}\left(\M,\R^{n+k}\right)$ whose extrinsic geometry is  $\mathfrak{S}$?
\end{q}

The criticality of $p=n$ has been pointed out in \cite{m2} based on the heuristics below. To construct $\iota$, one may solve first a \emph{Pfaff} system
\begin{align}\label{pfaff, new}
dP + \Omega P =0 
\end{align}
and then a \emph{Poincar\'{e}} system 
\begin{equation}\label{poincare, new}
d\iota = \omega P,
\end{equation}
where $\Omega \in \G\left(T^*\M \otimes \so(n+k)\right)$ is the connection 1-form of the Cartan formalism for the immersed submanifold (see \S\ref{subsec: cartan} below), $P: \M \to SO(n+k)$ represents a global change of coordinates, and $\omega:=\big(\omega^1,\ldots,\omega^n,0,\ldots,0\big)^\top$ where $\left\{\omega^i\right\}$ is a local orthonormal coframe on $T^*\M$. The Pfaff system~\eqref{pfaff, new} can be formally solved, treating everything as scalars,  by $$P \sim e^{-\int\Omega}.$$ The exponential function on the right-hand side is locally integrable only when the integral of $\Omega$ is in $L^\infty_\loc$. Hence, in view of the Sobolev--Morrey embedding $W^{1,p}_\loc(\M^n)\emb L^\infty_\loc(\M^n)$ for $p>n$, it is natural to require that $\Omega \in L^p$ for $p>n$ in \cite{m2, m3, sz}.

In 2020, Litzinger \cite{l} answered Question~\ref{q, fund} affirmatively in the critical case $\X = W^{2,2}$. He established the fundamental theorem of surface theory for $L^2$-second fundamental forms and $W^{2,2}$-isometric immersions, aligning with the regularity setting of the Willmore energy. As argued in \cite[\S 1, paragraph ensuing Theorem~1]{l}, the critical regularity case $p=n=2$ is attained by way of exploring the \emph{antisymmetry} of $\Omega$ and utilising the theory of Uhlenbeck gauges as well as Wente's compensated compactness estimate \cite{w}.

\subsection{Our main result}
We extend the result in \cite{l} to arbitrary dimensions and codimensions, by way of further leveraging the antisymmetric structure of $\Omega$ and putting it into the Coulomb--Uhlenbeck gauge. The idea arises from Uhlenbeck's seminal work \cite{u}: there exists a \emph{gauge transform} $P: \M \to SO(n+k)$ that takes $\Omega$ to a coexact (\emph{i.e.}, divergence-free) 1-form while preserving the flat curvature. The gauge transform can be regarded as a \emph{global} change of coordinates. The new coordinate frame proves favourable from analytic perspectives, as ellipticity emerges in the PDE for $P^\#\Omega$, the gauge-transformed connection 1-form. The theory of compensated compactness (Coifman--Lions--Meyer--Semmes \cite{clms}) also plays a key role in our analysis.

In this work, we are able to answer Question~\ref{q, fund} in the affirmative in the critical case $\X = W^{2,n}$ in any dimension $n \geq 3$ and codimension $k \geq 0$:
\begin{theorem}\label{thm: main, crude}
Let $(\M^n,g)$ be a simply-connected closed Riemannian manifold. Suppose that $\mathfrak{S} \in L^n$ is a weak solution to the Gauss--Codazzi--Ricci equations on $\M$. There exists a $W^{2,n}$-isometric immersion $\iota: (\M,g) \emb (\R^{n+k},\delta)$ whose extrinsic geometry coincides with $\mathfrak{S}$. 
\end{theorem}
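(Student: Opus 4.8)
The plan is to reduce Theorem~\ref{thm: main, crude} to a careful gauge-theoretic analysis of the Pfaff system \eqref{pfaff, new} and the Poincar\'e system \eqref{poincare, new}, following the strategy of Litzinger \cite{l} but replacing Wente's two-dimensional estimate with the higher-dimensional Coulomb gauge construction of Uhlenbeck \cite{u} and the compensated-compactness results of Coifman--Lions--Meyer--Semmes \cite{clms} and Rivi\`ere--Struwe \cite{rs}. First I would assemble, from the weak solution $\mathfrak{S}=(\two,\na^\perp) \in L^n$ of the Gauss--Codazzi--Ricci equations, the $\so(n+k)$-valued connection $1$-form $\Omega \in L^n\big(T^*\M \otimes \so(n+k)\big)$ of the Cartan moving-frame formalism (this is the assembly described in \S\ref{subsec: cartan}), and verify that the Gauss--Codazzi--Ricci equations are exactly the \emph{zero-curvature} (Maurer--Cartan) condition $d\Omega + \Omega \wedge \Omega = 0$ in the distributional sense. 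The regularity $\Omega \in L^n$ is precisely borderline, so one cannot directly solve \eqref{pfaff, new} in $W^{1,n} \emb L^\infty$; instead the antisymmetry must be exploited.

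\smallskip

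The core step is the gauge fixing. Working on a finite cover of $\M^n$ by small geodesic balls, on each ball I would invoke Uhlenbeck's theorem \cite{u} (in the form valid for $L^n$ curvature-free connections, cf.\ the Rivi\`ere--Struwe framework \cite{rs}): since the curvature of $\Omega$ vanishes, there is a local gauge transform $P_\alpha \in W^{1,n}(B_\alpha, SO(n+k))$ such that $\widetilde\Omega_\alpha := P_\alpha^\#\Omega = P_\alpha^{-1}\Omega P_\alpha + P_\alpha^{-1}dP_\alpha$ is in \emph{Coulomb gauge}, i.e.\ $d^*\widetilde\Omega_\alpha = 0$ with the boundary condition $\iota_{\partial B_\alpha}^* \star \widetilde\Omega_\alpha = 0$, together with the a priori bound $\|\widetilde\Omega_\alpha\|_{L^n(B_\alpha)} \le C\|\Omega\|_{L^n(B_\alpha)}$. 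Because the curvature is zero, the coexact $1$-form $\widetilde\Omega_\alpha$ satisfies both $d\widetilde\Omega_\alpha = -\widetilde\Omega_\alpha\wedge\widetilde\Omega_\alpha \in L^{n/2}$ and $d^*\widetilde\Omega_\alpha=0$; a Hodge-theoretic argument then writes $\widetilde\Omega_\alpha = dA_\alpha$ for a $W^{2,n/2}$-potential, and elliptic regularity for the div-curl system upgrades this — using that the nonlinear term $\widetilde\Omega_\alpha\wedge\widetilde\Omega_\alpha$ is a null-Lagrangian-type (compensated) quantity lying in the Hardy space $\mathcal{H}^1$ by \cite{clms} — to continuity/$W^{1,p}$-regularity of the primitive, hence to a $W^{2,n}$ frame. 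The local frames $P_\alpha$ are then patched into a global $P: \M \to SO(n+k)$ solving \eqref{pfaff, new} using simple-connectedness of $\M$ (the transition data live in the discrete obstruction $\pi_1$, which vanishes), exactly as in the classical monodromy argument of Tenenblat \cite{ten} and S.\ Mardare \cite{m2,m3,sz}, but now at the $W^{2,n}$-level justified by the gauge estimates.

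\smallskip

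Once $P \in W^{2,n}$ is obtained, I would solve the Poincar\'e system \eqref{poincare, new}: the $1$-form $\omega P$ is closed (this is where the Codazzi/Gauss structure and $dP+\Omega P = 0$ re-enter, giving $d(\omega P) = d\omega \, P - \omega \wedge \Omega P = 0$ once one checks the torsion-free/first-structure equation holds distributionally), and since $\M$ is simply connected the Poincar\'e lemma yields $\iota \in W^{2,n}(\M,\R^{n+k})$ with $d\iota = \omega P$. Then $\iota^\#\delta = (\omega P)^\top(\omega P) = \omega^\top\omega = g$ shows $\iota$ is an isometric immersion (immersion because $\mathrm{rank}\,d\iota = n$ follows from $P \in SO(n+k)$ and $\omega$ having rank $n$), and unwinding the Cartan formalism shows its second fundamental form and normal connection are $\two$ and $\na^\perp$, i.e.\ its extrinsic geometry is $\mathfrak{S}$. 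A density/approximation step — mollify $\mathfrak{S}$, run the construction for smooth data where the classical theorem \cite{ten} applies, and pass to the weak limit using the uniform Morrey/Hardy bounds and the weak continuity of the div-curl nonlinearity — makes the whole argument rigorous and also delivers the claimed $L^{p,n-p}_{2,w}$ generalisation for $p \in \, ]2,n]$, since all the estimates above are scale-invariant and localise naturally in Morrey norms.

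\smallskip

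The main obstacle I anticipate is the borderline gauge estimate: at $p=n$ the Coulomb gauge $\widetilde\Omega \in L^n$ is exactly critical, so closing the fixed-point/continuity argument for the primitive $A$ of $\widetilde\Omega$ requires the full strength of the compensated-compactness structure — one must show the quadratic term $\widetilde\Omega\wedge\widetilde\Omega$ genuinely lies in $\mathcal{H}^1$ (not merely $L^1$) and that the smallness of $\|\Omega\|_{L^n}$ on small balls can be arranged, which in turn needs a (non-obvious) absolute-continuity / no-concentration property of the $L^n$ norm of $\mathfrak{S}$. Controlling the patching of the local Coulomb gauges — reconciling different gauge choices on overlaps without losing the $W^{2,n}$ regularity — is the second delicate point, and is precisely where Rivi\`ere--Struwe's machinery for harmonic maps in arbitrary dimension and codimension \cite{rs} is indispensable.
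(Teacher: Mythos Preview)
Your overall architecture --- assemble $\Omega$, put it in Coulomb gauge locally, exploit the antisymmetry via compensated compactness, patch by monodromy, then solve the Poincar\'e system --- matches the paper. But the decisive analytic step is different, and the one you describe does not close.

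You aim to take the Coulomb-gauged connection $\widetilde\Omega = P^\#\Omega$, write it as $dA$, and then ``upgrade regularity'' of a primitive using that $\widetilde\Omega\wedge\widetilde\Omega\in\mathcal{H}^1$. Two problems. First, $\widetilde\Omega$ is \emph{co}-closed ($d^*\widetilde\Omega=0$), not closed, so it is $\star d\xi$ for an $(n-2)$-form $\xi$ with $\xi|_{\partial B}=0$ (this is the Rivi\`ere--Struwe form of the gauge lemma the paper uses), not $dA$. Second, and more seriously, the ``elliptic regularity upgrades this to continuity/$W^{1,p}$'' step does not work at the critical scaling: with $d^*\widetilde\Omega=0$ and $d\widetilde\Omega\in\mathcal{H}^1$, Calder\'on--Zygmund with Hardy data gives back only $L^{n/(n-1)}$-type information, which is worse than the $L^n$ you started with. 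There is no bootstrap here in dimension $n\ge 3$.

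The paper's punchline --- which you do not have --- is that one shows $\widetilde\Omega\equiv 0$ outright, not merely that it is more regular. Flatness becomes the elliptic problem $d\star d\xi + \star d\xi\wedge\star d\xi=0$ with $\xi=0$ on $\partial B$; testing against $\xi$ and using $\mathcal{H}^1$--BMO duality gives
\[
\|d\xi\|_{L^2}^2 \;\le\; C\,[\xi]_{\mathrm{BMO}}\,\|\star d\xi\wedge\star d\xi\|_{\mathcal{H}^1}\;\le\; C\,\|\Omega\|_{L^{q,n-q}(B)}\,\|d\xi\|_{L^2}^2,
\]
where CLMS supplies the second factor and the Morrey bound $[\xi]_{\mathrm{BMO}}\le C\|d\xi\|_{L^{q,n-q}}\le C\|\Omega\|_{L^{q,n-q}}$ supplies the first. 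On a small enough ball the coefficient is $<1$, forcing $d\xi=0$ and hence $P^{-1}dP+P^{-1}\Omega P=0$ --- i.e.\ the Pfaff equation directly. This absorption argument is what replaces your vague regularity upgrade; your own ``main obstacle'' paragraph correctly senses the difficulty but does not resolve it. (Minor slip: $P\in W^{1,n}$, not $W^{2,n}$; that still yields $\iota\in W^{2,n}$ via $d\iota=\omega P$.)
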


In fact, we may go further beyond the critical regularity space $W^{2,n}$. The following stronger result, Theorem~\ref{thm: main, new}, will be established as our main theorem. Here and hereafter, $L^{p,n-p}_w$ denotes the weak Morrey space (see \S\ref{sec: nomenclature} for details), for which we have the continuous embeddings $$L^{n}(\M) \emb L^{p,n-p}_w(\M) \emb L^2(\M) \qquad \text{for any } 2<p\leq n$$ over compact manifold $\M^n$. Hence, Theorem~\ref{thm: main, crude} follows immediately from Theorem~\ref{thm: main, new}.

\begin{theorem}\label{thm: main, new}
Let $(\M^n,g)$ be a simply-connected closed Riemannian manifold; $n \geq 3$. Suppose that $\mathfrak{S} \in L^{p,n-p}_w$ with $2<p \leq n$ is a weak solution to the Gauss--Codazzi--Ricci equations on $\M$ with arbitrary codimension $k$. There exists an isometric immersion $\iota: (\M,g) \emb (\R^{n+k},\delta)$ in the regularity class $\X = L^{p,n-p}_{2,w}$ whose extrinsic geometry coincides with $\mathfrak{S}$.  Moreover, $\iota$ is unique modulo Euclidean rigid motions in $\R^{n+k}$ outside null sets. 
\end{theorem}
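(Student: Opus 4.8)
The plan is to follow the classical three-step scheme --- Pfaff system, Poincar\'{e} system, reconstruction --- but to carry it out entirely inside a good Uhlenbeck--Coulomb gauge, where the connection $1$-form is divergence-free and hence governed by an elliptic system, and where the relevant quadratic nonlinearities have a compensated-compactness (div-curl / Jacobian) structure that survives passage to the weak Morrey topology. First, I would assemble the Cartan connection $1$-form $\Omega \in \G(T^*\M \otimes \so(n+k))$ out of the data $\mathfrak{S} = (\two, \na^\perp)$ together with the Levi-Civita connection of $g$; the Gauss--Codazzi--Ricci equations are precisely the statement that the curvature $d\Omega + \Omega \wedge \Omega$ vanishes weakly, and the regularity hypothesis $\mathfrak{S} \in L^{p,n-p}_w$ translates to $\Omega \in L^{p,n-p}_w$. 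Since $2 < p \leq n$, this is a subcritical-or-critical Morrey bound of exactly the type handled by Rivi\`{e}re--Struwe \cite{rs}; I would invoke their construction (a Coulomb/Uhlenbeck gauge adapted to antisymmetric $1$-forms with small Morrey norm, patched via the simply-connectedness and closedness of $\M$) to produce a global gauge transform $P \colon \M \to SO(n+k)$, of Morrey regularity $L^{p,n-p}_{1,w}$ (one derivative better than $\Omega$), such that the transformed form $\widetilde{\Omega} = P^\#\Omega$ is coexact, $d^*\widetilde{\Omega} = 0$, and still flat.

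Second, with the gauge fixed I would solve the Pfaff system $dP + \Omega P = 0$: flatness of $\Omega$ plus simple-connectedness gives a solution $P$, and the point is to upgrade its regularity. Writing the equation in the Coulomb gauge turns it into an elliptic system for $P$ whose right-hand side is, schematically, $\na P \cdot \widetilde{\Omega}$ --- a product of two $L^{p,n-p}_w$-type quantities --- and this is where I expect the main obstacle to lie. In the borderline Morrey scaling one cannot simply Sobolev-multiply; one must recognise the div-curl structure (closedness of $\widetilde{\Omega}$ as a consequence of the curvature equation, against the gradient $\na P$) and apply the Coifman--Lions--Meyer--Semmes Hardy-space estimates \cite{clms}, in their Morrey-localised form, to gain the requisite integrability and obtain $P \in L^{p,n-p}_{2,w} \hookrightarrow C^0$ when $p = n$, or at least a well-defined trace along $\M$ when $2 < p < n$. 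Controlling constants uniformly across the finitely many gauge patches, and checking that $P$ takes values in $SO(n+k)$ after the regularity bootstrap (this follows from $P^\top P$ solving a linear ODE-type identity along curves, but must be verified in the weak setting), are the technical hearts of the argument.

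Third, having $P$, I would solve the Poincar\'{e} system $d\iota = \omega P$, where $\omega = (\omega^1,\dots,\omega^n,0,\dots,0)^\top$ is built from a local orthonormal coframe; the compatibility condition $d(\omega P) = 0$ is again guaranteed by the structure equations (Cartan's first structure equation relating $d\omega$ to $\Omega$ and the torsion-freeness of the Levi-Civita connection), so a primitive $\iota$ exists globally on simply-connected $\M$ and inherits $\iota \in L^{p,n-p}_{2,w}$ from $P \in L^{p,n-p}_{1,w}$. One then checks that $\iota$ is an immersion --- $d\iota$ has full rank because $P$ is orthogonal, so $d\iota^\top d\iota = \omega^\top \omega$ pointwise a.e., which is exactly $g$; this simultaneously verifies that $\iota$ is isometric. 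Recovering that the second fundamental form and normal connection of $\iota$ reproduce $\mathfrak{S}$ amounts to differentiating $d\iota = \omega P$ once more and reading off the tangential and normal components, using $dP = -\Omega P$; this is a bookkeeping computation in the chosen gauge, valid a.e.\ given the regularity already secured. Finally, uniqueness modulo Euclidean rigid motions outside null sets is classical: two isometric immersions with the same extrinsic data have Cartan frames differing by a constant element of $SO(n+k)$ (solve the homogeneous Pfaff system), whence the immersions differ by a rigid motion; the weak-regularity version follows because the relevant ODE uniqueness holds along a.e.\ curve and $\M$ is connected.

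The step I expect to be genuinely hard is the regularity bootstrap for $P$ in the Morrey scale at $p = n$: one is exactly at the threshold where $W^{1,n}$ fails to embed in $L^\infty$, so the whole point --- as in Litzinger's $n=2$ treatment \cite{l} and in the harmonic-map theory of \cite{rs} --- is that the antisymmetry of $\Omega$ forces the nonlinearity into a form amenable to Wente-type / Hardy-space compensation, and one must verify that this compensation is stable under the weak Morrey norm rather than the strong one, which is what ultimately allows $p < n$ as well. Getting the div-curl cancellation to close, with all estimates uniform over the gauge cover and compatible with the global topology of $\M$, is the crux.
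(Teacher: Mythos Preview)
Your overall architecture --- assemble the Cartan $\Omega$, use Rivi\`{e}re--Struwe to pass to a Coulomb gauge, invoke CLMS-type compensation, then integrate Poincar\'{e} --- is the paper's, but there is a genuine gap in your second paragraph, where you separate ``find a Coulomb gauge'' from ``solve the Pfaff system and then bootstrap its regularity.'' You write that ``flatness of $\Omega$ plus simple-connectedness gives a solution $P$'' to $dP + \Omega P = 0$, after which you plan to upgrade regularity via ellipticity in the Coulomb gauge. But the \emph{existence} of any Pfaff solution at this regularity is exactly the difficulty: for $\Omega$ merely in $L^{p,n-p}_w$ with $p\le n$ one cannot appeal to Frobenius/parallel transport, nor to Mardare's $L^p$, $p>n$ theory --- that is precisely what the theorem is meant to establish. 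Assuming a solution and then bootstrapping assumes away the central obstacle. (Relatedly, the Rivi\`{e}re--Struwe gauge is obtained only on small balls with small Morrey norm, not globally; globalisation by monodromy is available only \emph{after} one has a local Pfaff solution with uniqueness.)

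The paper performs no regularity bootstrap. Its key observation is that the Coulomb gauge $P$ furnished by Lemma~\ref{lem: riviere} is \emph{itself} the Pfaff solution, once one proves that the gauge-transformed connection $\Xi := P^\#\Omega = \star d\xi$ vanishes identically on the small ball. This vanishing is the analytic heart of the argument: flatness yields $d\star d\xi + \star d\xi \wedge \star d\xi = 0$ in $\ball$ with Dirichlet data $\xi=0$ on $\p\ball$; testing against $\xi$ gives $\|d\xi\|_{L^2(\ball)}^2 = -\int_\ball \xi \wedge \star d\xi \wedge \star d\xi$; then $\mathcal{H}^1$--BMO duality, the CLMS wedge-product estimate applied to $\Xi\wedge\Xi$ (not to $\na P\cdot\widetilde{\Omega}$ as you propose), and the bound $[\xi]_{\rm BMO}\leq C\|\Omega\|_{L^{q,n-q}}$ coming from the gauge lemma combine to give $\|d\xi\|_{L^2}^2 \leq C\varepsilon'\|d\xi\|_{L^2}^2$, hence $\Xi=0$ by absorption. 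Once $\Xi=0$, the identity $P^{-1}dP + P^{-1}\Omega P = 0$ gives $dP+\Omega P=0$ immediately, with $P$ already in $L^{q,n-q}_1$ from the gauge construction; a limiting argument then recovers $L^{p,n-p}_{1,w}$. So the role of compensated compactness here is to force \emph{vanishing} of the gauge-transformed connection, not to gain derivatives on a pre-existing Pfaff solution.
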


A direct consequence of Theorem~\ref{thm: main, new} is the weak compactness of immersions in the regularity class $\X = L^{p,n-p}_{2,w,\loc}$, $2 < p \leq n$, in line with \cite{cl, l}: A uniformly bounded family of immersions in $\X$ converges weakly to an immersion, with its induced metrics weakly convergent in $L^{p,n-p}_{1,w,\loc}$ and extrinsic geometries weakly convergent in $L^{p,n-p}_{w,\loc}$ modulo subsequences, \emph{provided that the induced metrics are nondegenerate}. See \S\ref{sec: weak compactness},  Theorem~\ref{thm: weak rigidity} for details.

In the remaining parts of the introduction, we outline the key steps of the proof. 

\subsection{Gauss--Codazzi--Ricci Equations and Cartan formalism}\label{subsec: cartan}

We briefly recall the Gauss--Codazzi--Ricci equations, which are compatibility equations for the existence of isometric immersions. A map $\iota: (\M,g)\map (\R^{n+k}, \delta)$ is said to be an \emph{isometric immersion} if and only if $d\iota$ is one-to-one and $g=\iota^\#\delta$, both in the \emph{a.e.} sense with respect to the Riemannian measure.

Given a Riemannian manifold $(\M,g)$ of dimension $n \geq 2$ and a vector bundle $\pi: E \to \M$ of rank $k$ with some Riemannian metric $g^E$ and metric-compatible connection $\na^E$. One may write down the Gauss, Codazzi, and Ricci equations associated to the bundle $E$ by splitting the zero Riemann curvature tensor into $(T\M, T\M)$, $(T\M, E)$, and $(E,E)$-factors, respectively. Here $E$ is the putative normal bundle --- once an isometric immersion $\iota: (\M,g) \to (\R^{n+k},\delta)$ has been found,  $T^\perp \M := T\R^{n+k}\slash T[\iota(\M)]$ can be identified with $E$ via a bundle isomorphism. 

In addition, assume that there exists a bilinear mapping
\begin{equation*}
\mathcal{S}: \G(E) \times \G(T\M) \longrightarrow \G(T\M), \qquad \mathcal{S}(\eta, X) \equiv  \mathcal{S}_\eta X 
\end{equation*}
such that 
\begin{equation*}
g\big(X, \mathcal{S}_\eta Y\big) = g\big( \mathcal{S}_\eta X, Y\big)
\end{equation*}
for all $X,Y \in \G(T\M)$ and $\eta \in \G(E)$. We shall then define the tensor $$\two:\G(T\M) \times \G(T\M) \longrightarrow \G(E)$$ by 
\begin{equation*}
g^E\big(\two(X,Y),\eta\big) := -g\big(\mathcal{S}_\eta X, Y\big).
\end{equation*}

With the above notations at hand, the Gauss, Codazzi, and Ricci equations may be expressed respectively as follows. See do Carmo \cite[Chapter~6]{d} and Tenenblat \cite{ten}.
\begin{eqnarray}
&& g\big(\two(X,Z), \two (Y,W)\big) - g\big( \two(X,W),\two(Y,Z) \big) = R(X,Y,Z,W),\label{gauss}\\
&& \overline{\na}_Y\two(X,Z) - \overline{\na}_X\two(Y,Z)=0,\label{codazzi}\\
&& g\big([\mathcal{S}_\eta, \mathcal{S}_{\zeta}] X, Y\big) = R^E(X,Y,\eta,\zeta),\label{ricci}
\end{eqnarray}
for all $X,Y,Z,W \in \G(T\M)$ and $\eta,\zeta \in \G(E)$. Here $[\bullet,\bullet]$ is the commutator,  $R$ and $R^E$ are the Riemann curvature tensors for $(T\M,g)$ and $\left(E,g^E\right)$, respectively, and $\overline{\na}$ is the Levi-Civita connection on the Euclidean space $\left(\R^{n+k},\delta\right)$.

From the PDE perspectives, in Equations~\eqref{gauss}, \eqref{codazzi}, and \eqref{ricci}, the Riemannian metric $g$ is given and the extrinsic geometry $\left(\two, \na^E\right)$ are unknown. Thus,  the solution $\mathfrak{S}$ to the Gauss--Codazzi--Ricci equations in Theorem~\ref{thm: main, new} is understood as $\mathfrak{S} = \left(\two, \na^E\right)$\footnote{The slight difference between the notations here and in \eqref{frakS, new} arises due to the following subtle reason: before finding the isometric immersion $\iota$ via solving the Gauss--Codazzi--Ricci equations, one cannot define \emph{a priori} the normal bundle. Thus, the Gauss--Codazzi--Ricci equations are formulated instead with respect to an abstract vector bundle $E$, which shall be later justified to coincide with the normal bundle once $\iota$ is solved. This is also the approach taken by Tenenblat \cite{ten}.}.

To proceed, let $\{\p\slash\p_i\}_{i=1}^n$ be a local moving frame for $T\M$ on some chart $U\subset \M$ such that $E$ is trivialised over $U$, and let $\{\eta_\alpha\}_{\alpha=n+1}^{n+k}$ be a local moving frame for $E\big|_U$. We follow \cite{ten} for convention on indices: $1\leq i,j,k\leq n$; $n+1\leq\alpha,\beta \leq n+k$; and $1 \leq a,b,c \leq n+k$. That is, $i,j,k$ index for the tangent bundle $T\M$, and $\alpha,\beta$ index for the putative normal bundle $E$.

The \emph{Cartan formalism} refers to the following identities in local coordinates:
\begin{eqnarray}
&&d\omega^i = \sum_j \omega^j \wedge \Omega^i_j;\label{first structure eq}\\
&&0=d\Omega^a_b + \sum_c \Omega^c_b\wedge \Omega^a_c,\label{second structure eq}
\end{eqnarray}
where $\{\omega^i\}_{1\leq i \leq n}$ is the orthonormal coframe on $(T^*\M,g)$ dual to $\{\p\slash\p_i\}_{i=1}^n$, and $\{\Omega^a_b\}_{1\leq a,b \leq n+k}$ is defined entry-wise by
\begin{eqnarray}
&& \Omega^i_j (\p_k) := g(\na_{\p_k}\p_i,\p_j);\label{Omega 1}\\
&& \Omega^i_\alpha (\p_j) \equiv -\Omega^\alpha_i (\p_j) := g^E\big(\two(\p_i,\p_j), \eta_\alpha\big);\label{Omega 2}\\
&& \Omega^\alpha_\beta(\p_j):=g^E\big(\na^E_{\p_j}\eta_\alpha,\eta_\beta\big).\label{Omega 3}
\end{eqnarray}
In the above, $\{\p_i\}$ is the orthonormal frame for $(T\M,g)$ dual to $\{\omega^i\}$. In the Cartan formalism, $\Omega=\{\Omega^a_b\}$ is said to be the \emph{connection 1-form}. 

The Cartan formalism equations~\eqref{first structure eq} and \eqref{second structure eq}, \emph{a.k.a.} the first and second \emph{structural equations}, are well-known to be equivalent to the  Gauss--Codazzi--Ricci Equations~\eqref{gauss}--\eqref{ricci} as purely algebraic (pointwise) identities, hence can be easily validated in the sense of distributions too. 

\subsection{Isometric immersions in good global coordinates}
We now change our perspectives --- instead of working in local coordinates, we consider the second structural equation~\eqref{second structure eq} as a global identity on the frame bundle over $(\M,g)$. In this subsection let us only highlight the key ideas; detailed explanations for the gauge-theoretic terminologies are presented in \S\ref{sec: gauge}.

As pointed out in \S\ref{subsec: intro, pfaff}, the key to the construction of an isometric immersion $\iota: (\M,g) \emb (\R^{n+k},\delta)$ is to solve for a \emph{gauge transform}, \emph{i.e.}, a global change of coordinates $$P: \M \longrightarrow SO(n+k),$$ which satisfies the Pfaff Equation~\eqref{pfaff, new}: $$dP+\Omega P=0.$$ Note that $P$ acts on the curvature 2-form (see \S\ref{sec: gauge},  \eqref{def, curvature} below) as follows:
\begin{equation}\label{gauged curvature}
 0 = P^\#\mathfrak{F}_\Omega = d\left(P^\#\Omega\right) + \left(P^\#\Omega\right)\wedge\left(P^\#\Omega\right),
\end{equation}
where
\begin{equation}\label{gauged connection 1-form}
P^\#\Omega := P^{-1}dP + P^{-1}\Omega P.
\end{equation}
Since $P^{-1}$ takes values of invertible matrices \emph{a.e.}, once we find $P$ such that $P^\#\Omega = 0$, the Pfaff  equation~\eqref{pfaff, new} immediately follows. Hence, in loose terms (neglecting momentarily the local \emph{vs} global issues for the gauges and/or isometric immersions), we observe:
\begin{align*}
&\text{The existence of isometric immersion $\iota$ follows by showing that}\nonumber\\
&\text{the connection 1-form $\Omega$ is (locally) gauge-equivalent to the trivial connection.} \tag{$\clubsuit$}
\end{align*}

This is the starting point of our analytic estimates, which  rely crucially on  Rivi\`{e}re--Struwe \cite{rs} concerning the higher dimensional and codimensional harmonic maps with critical regularity in Morrey spaces. Gauge theoretic terminologies are explained in detail in \S\ref{sec: gauge} below.



\subsection{Organisation and remarks}
 
The remaining sections of the paper are organised as follows.

  \S\S\ref{sec: nomenclature}--\ref{sec: proof} are devoted to the proof of our Main Theorem~\ref{thm: main, new}. Notations and nomenclatures concerning geometry and function spaces shall be explained in \S\ref{sec: nomenclature}. Then, in \S\ref{sec: gauge} we introduce the Coulomb--Uhlenbeck gauge and its corresponding estimates in Morrey spaces, which constitute the key tool for our arguments. We conclude the proof for  Theorem~\ref{thm: main, new} in \S\ref{sec: proof}. Finally, as a corollary of the Main Theorem~\ref{thm: main, new}, we present a weak compactness theorem for slightly supercritical immersions in weak  Morrey spaces in \S\ref{sec: weak compactness}.


This paper is in continuation of \cite{li-new}, in which the first named author gives a detailed account for a proof of the fundamental theorem of submanifold theory in the subcritical regime, \emph{i.e.}, for $\iota \in W^{2,p}(\M, \R^{n+k})$ with $p>n=\dim\M$, hence presenting an alternative proof of S. Mardare and Szopos' results in \cite{m2, m3, sz} in the framework of Cartan formalism. It is also demonstrated in \cite{li-new} that a (non-gauge-equivalent) variant of Uhlenbeck's weak compactness theorem \cite{u} can be deduced from the nonlinear smoothing techniques developed in \cite{m2}.

The key analytic tool for our developments in this paper, \emph{i.e.}, the estimates in critical Morrey spaces for harmonic maps in arbitrary dimensions and codimensions \emph{\`{a} la} Rivi\`{e}re--Struwe \cite{rs}, has also been exploited in the energy-critical regimes of many other PDE systems in mathematical physics. See, \emph{e.g.},  Chen--Jost--Wang--Zhu \cite{cjwz} for Dirac-harmonic maps, and Guo--Liu--Xiang \cite{guo2} for a nonlinear Cosserat micropolar model in elasticity.

The regularity class of isometric immersions considered in this work --- $L^{p,n-p}_{2,w}$ for $2<p \leq n$ --- is somewhat stronger than the ``wild'', \emph{non-rigid} $C^{1,\gamma}$-isometric embeddings or immersions constructed by convex integration schemes, pioneered by Nash in the groundbreaking paper  \cite{n1} and recently further developed by De Lellis and Sz\'{e}kelyhidi Jr., among many others; \emph{cf}. \cite{ds} for a comprehensive survey. It should be pointed out that in all these later works involving convex integration,  curvatures (both intrinsic and extrinsic ones) cannot be defined --- in general, not even in the distributional sense. 

\section{Preliminaries}\label{sec: nomenclature}

In this section, we give a detailed account for the notations and nomenclatures used in this paper. Throughout, the symbol $C=C(a_1, a_2, \ldots)$ denotes that the constant $C$ depends on the parameters $a_1$, $a_2$, $\ldots$.

\subsection{Geometry}

Our notations on differential geometry are standard. We refer the reader to do Carmo \cite{d} for elements of differential geometry, to Chern \textit{et al} \cite{chern} and the recent exposition by Clelland \cite{clelland} for Cartan's moving frames.
 
The space of $m \times m$ matrices with real entries is designated as $\gl(m;\R)$, while $GL(m;\R)$ denotes the group of invertible matrices in $\gl(m;\R)$. Its subgroup $SO(m)$ consists of the orthogonal matrices in $GL(m;\R)$, and whose Lie algebra is $\so(m)$,  the space of skew-symmetric matrices. Also, ${\bf Id}$ denotes the identity matrix.

The vector bundle of $r$-fold exterior product of $T^*\M^n$ is written as $\bigwedge^r T^*\M$, whose sections $\sigma$ are the differential $r$-forms on $\R^n$. We write $\sigma \in \Omega^r(\M):=\G\left(\bigwedge^r T^*\M\right)$. Given a Lie algebra $\g$, we denote by 
$\Omega^r(\M; \g):=\G\left(\bigwedge^r T^*\M \otimes \g\right)$ the space of $\g$-valued $r$-forms over $\M$.

 For example, for $r=1$ and $\g \subset \gl(m; \R)$ a matrix subalgebra, any $\Theta \in \Omega^1(\M;\g)$ can be represented in local co-ordinates as below:
\begin{equation*}
\Theta= \Big({}^{[1]}\Theta^a_b, {}^{[2]}\Theta^a_b,\cdots,{}^{[n]}\Theta^a_b\Big)^\top, 
\end{equation*}
where ${}^{[i]}\Theta \equiv \left\{{}^{[i]}\Theta^a_{b}\right\}_{1 \leq a,b \leq m}: \M \to \g$ for each fixed $i \in \{1,\ldots,n\}$. In the sequel, we always take $m=n+k$.

The Sobolev spaces $W^{\ell,p}$ for fields of vectorfields, differential forms,  connections, matrix-valued differential forms, etc., are defined as usual. We write $\|\bullet\|_{W^{\ell,p}(U)}$ for the $W^{\ell,p}$-norm taken over $U \subset \M$.  The symbols $d$ and $d^*$ are the exterior differential and co-differential, respectively. The Laplace--Beltrami operator is $\Delta = dd^*+d^*d$, and the star $\star$ is reserved for the Hodge star operator, in contrast to the asterisk $*$ that denotes operator adjoints or matrix conjugates.

For a differential $r$-form $\alpha$ on $U \subset \M$, we write $\ttt\alpha$ and $\nnn\alpha$  for their tangential and normal traces on $\p U$, respectively. That is, $\ttt\alpha:=\jmath^\#\alpha$ and $\nnn\alpha:=\alpha-\ttt\alpha$, where $\jmath$ is the inclusion $\p U \emb \overline{U}$ and $\jmath^\#$ is the pullback operator under $\jmath$. These trace operators extend naturally to any $L^p$-differential form $\alpha$ if, in addition, $d\alpha$ or $d^*\alpha$ is also of $L^p$-regularity. See Schwarz \cite[p.27, Chapter 1, Eqs.~(2.25) and (2.26)]{sch}.


\subsection{Function spaces}
Let $U$ be a bounded smooth domain in $\R^n$. For $1 \leq p<\infty$, $L^p(U)$ denotes the standard Lebesgue space and $L_w^p(U)$ the weak $L^p$-space.  It holds that
\begin{align}\label{weak Lp embedding}
L^p(U) \subset L_w^p(U) \subset L^q(U) \qquad \text{ for } 1 \leq q<p.
\end{align}

The following $L^p$-based spaces, first appeared in Morrey's seminal work \cite{morrey} on regularity of harmonic mappings from 2D domains into manifolds, are now known as the Morrey spaces:

\begin{definition}
Let $1 \leq p<\infty$ and $0 \leq \lambda \leq n$. 
The Morrey space $L^{p, \lambda}(U)$ consists of functions $f \in L^p(U)$ that satisfy 
$$
\|f\|_{L^{p, \lambda}(U)} := \sup _{x \in U,\, 0<r\le \operatorname{diam}(U)} r^{-\lambda / p}\|f\|_{L^p\left(B_r(x) \cap U\right)}<\infty .
$$
The weak Morrey space $L_w^{p, \lambda}(U)$ consists of functions $f \in L_w^p(U)$ that satisfy 
$$
\|f\|_{L_w^{p, \lambda}(U)} := \sup _{x \in U,\, 0<r\le \operatorname{diam}(U)} r^{-\lambda / p}\|f\|_{L_w^p\left(B_r(x) \cap U\right)}<\infty .
$$
In addition, define inductively $f \in L_\ell^{p,\lambda}(U)$ by $\na f \in L_{\ell-1}^{p,\lambda}(U)$ for each $\ell = 1,2,3,\ldots$.
\end{definition}

When $\lambda=0$, $L^{p, 0}(U)=L^p(U)$ and $L_w^{p, 0}(U)=L_w^p(U)$. When $\lambda=n$, $L^{p, n}(U)=L^{\infty}(U)$ due to the Lebesgue differentiation theorem. In addition, 

\begin{lemma} \label{weak Morrey embedding}
The following embedding results hold:
\begin{enumerate}
\item
For any $1 \leq q \leq p<\infty$ and $0\le \theta , \sigma \le n$ such that $\frac{\theta-n}{q} \le \frac{\sigma-n}{p}$,\,one has $L^{p,\sigma}(U)\subset L^{q,\theta}(U)$.
\item
For any $1 \leq q<p<n$, one has $L_w^{p,n-p}(U) \subset L^{q,n-q}(U)$.
\item
For any $1 \leq q<p<\infty$ and $0 \le \lambda \le n$, one has $L_w^{p,\lambda}(U) \subset L^{q,\lambda}(U) \subset L_w^{q,\lambda}(U)$.
\end{enumerate}
\end{lemma}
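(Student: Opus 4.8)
The plan is to reduce all three inclusions to two elementary inequalities on balls and then carry out some bookkeeping with powers of the radius.

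\emph{Step 1: two local inequalities.} On any measurable set $\Omega \subset \R^n$ with $|\Omega|<\infty$ one has: (i) if $1 \le q \le p < \infty$, then Hölder's inequality gives $\|g\|_{L^q(\Omega)} \le |\Omega|^{1/q-1/p}\|g\|_{L^p(\Omega)}$; (ii) if $1 \le q < p < \infty$, then writing $\int_\Omega |g|^q\,dx = \int_0^\infty q t^{q-1}\big|\{|g|>t\}\big|\,dt$, using $\big|\{|g|>t\}\big| \le \min\{|\Omega|,\, t^{-p}\|g\|_{L^p_w(\Omega)}^p\}$, splitting the integral at $T>0$ and optimising (the balancing choice is $T = \|g\|_{L^p_w(\Omega)}|\Omega|^{-1/p}$) yields $\|g\|_{L^q(\Omega)} \le \big(\tfrac{p}{p-q}\big)^{1/q}|\Omega|^{1/q-1/p}\|g\|_{L^p_w(\Omega)}$. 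In the sequel we apply these with $\Omega = B_r(x)\cap U$, using the crude bound $|B_r(x)\cap U| \le |B_1|\,r^n$, valid for all $x\in U$ and $0<r\le\operatorname{diam}(U)$.

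\emph{Step 2: statement (1).} Fix $f\in L^{p,\sigma}(U)$, $x\in U$, $0<r\le\operatorname{diam}(U)$. Applying Step 1(i) on $B_r(x)\cap U$, then $|B_r(x)\cap U|\le|B_1|r^n$, and multiplying by $r^{-\theta/q}$, one gets
\begin{equation*}
r^{-\theta/q}\|f\|_{L^q(B_r(x)\cap U)} \le |B_1|^{1/q-1/p}\, r^{\frac{n-\theta}{q}-\frac{n-\sigma}{p}}\cdot r^{-\sigma/p}\|f\|_{L^p(B_r(x)\cap U)}.
\end{equation*}
Since $r^{-\sigma/p}\|f\|_{L^p(B_r(x)\cap U)}\le\|f\|_{L^{p,\sigma}(U)}$ by definition, and since the hypothesis $\frac{\theta-n}{q}\le\frac{\sigma-n}{p}$ is exactly $\frac{n-\theta}{q}-\frac{n-\sigma}{p}\ge 0$ (so that $r^{\frac{n-\theta}{q}-\frac{n-\sigma}{p}}\le\operatorname{diam}(U)^{\frac{n-\theta}{q}-\frac{n-\sigma}{p}}$), taking the supremum over $x$ and $r$ gives $\|f\|_{L^{q,\theta}(U)}\le C\,\|f\|_{L^{p,\sigma}(U)}$ with $C=C(n,p,q,\sigma,\theta,\operatorname{diam}U)$.

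\emph{Step 3: statements (3) and (2).} The inclusion $L^{q,\lambda}(U)\subset L^{q,\lambda}_w(U)$ is immediate from $\|g\|_{L^q_w(\Omega)}\le\|g\|_{L^q(\Omega)}$ on every ball. For $L^{p,\lambda}_w(U)\subset L^{q,\lambda}(U)$, repeat Step 2 with Step 1(ii) replacing 1(i): for $f\in L^{p,\lambda}_w(U)$,
\begin{equation*}
r^{-\lambda/q}\|f\|_{L^q(B_r(x)\cap U)} \le C\, r^{\frac{n-\lambda}{q}-\frac{n-\lambda}{p}}\cdot r^{-\lambda/p}\|f\|_{L^p_w(B_r(x)\cap U)} \le C\, r^{(n-\lambda)\left(\frac1q-\frac1p\right)}\|f\|_{L^{p,\lambda}_w(U)},
\end{equation*}
and the exponent $(n-\lambda)(\tfrac1q-\tfrac1p)\ge 0$ since $\lambda\le n$ and $q<p$, so the supremum is finite. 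Finally, for statement (2) take $\lambda=n-p$, $f\in L^{p,n-p}_w(U)$; Step 1(ii) together with $\|f\|_{L^p_w(B_r(x)\cap U)}\le r^{(n-p)/p}\|f\|_{L^{p,n-p}_w(U)}$ gives $\|f\|_{L^q(B_r(x)\cap U)}\le C\,r^{\frac nq-1}\|f\|_{L^{p,n-p}_w(U)}$, hence $r^{-(n-q)/q}\|f\|_{L^q(B_r(x)\cap U)}\le C\|f\|_{L^{p,n-p}_w(U)}$; taking the supremum concludes the proof.

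\emph{On the difficulty.} None of the steps is genuinely hard; the only points requiring care are the weak-type interpolation bound in Step 1(ii) (the correct choice of truncation level) and, more to the point, the tracking of the exponents of $r$. In statement (2) that exponent vanishes identically, which is exactly why this is the scale-invariant (critical) endpoint embedding and why the conclusion is a true, non-strict inclusion; by contrast statements (1) and (3) have a strictly positive slack in the exponent, which is the source of the $\operatorname{diam}(U)$-dependent constant — harmless here, as only set inclusions are asserted.
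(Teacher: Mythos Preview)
Your proof is correct and follows essentially the same route as the paper's: both reduce the inclusions to H\"older-type inequalities on balls (in Lorentz-space language for the paper, via the explicit layer-cake/truncation argument in your Step~1(ii)) together with bookkeeping of the radius exponents. The paper's proof is only a one-line reference to Grafakos and Gunawan--Hakim--Nakai--Sawano, so your self-contained argument is in fact more informative; one minor imprecision in your closing commentary is that the exponent slack in (1) and (3) need not be \emph{strictly} positive (equality in the hypothesis of (1), or $\lambda=n$ in (3), gives zero slack), but this does not affect the proof.
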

\begin{proof}
It follows by applying Hölder's inequality to the Lorentz norms. See Grafakos \cite[Section 1.4]{Grafakos} and the recent work \cite{Gunawan}.
\end{proof}

We also recall the Hardy and BMO spaces:

\begin{definition}
\begin{enumerate}
\item
A function $f: U \subset \R^n \to \R$ is of bounded mean oscillation, denoted as $f \in  {\rm BMO}(U)$, if 
$$
[f]_{{\rm BMO}(U)} :=\sup _{x \in U,\, 0<r<\operatorname{diam}(U)} r^{-n / p} \left\|f-f_{r, x}\right\|_{L^p\left(B_r(x) \cap U\right)}<\infty.
$$
Here
$$
f_{r, x}=\frac{1}{\left|B_r\left(x\right) \cap U\right|} \int_{B_r\left(x\right) \cap U} f(y) \,\dd y .
$$
\item
The Hardy space $\mathcal{H}^p(\mathbb{R}^n)$ for $p \in ]0,\infty]$ consists of functions $f \in L^p(\mathbb{R}^n)$ such that 
$$\sup _{t>0}\left\{t^{-n}\left|\int_{\R^n} f(y) h\left(\frac{x-y}{t}\right)\,\dd y \right|\right\} \in L^p(\mathbb{R}^n),
$$
where $h$ is a fixed Schwartz function that satisfies $\int_{\R^n} h = 1$. This supremum is the Hardy space seminorm $\|f\|_{\mathcal{H}^p}$. 
\end{enumerate}

\end{definition}

Notice that  $\|f\|_{\mathcal{H}^p}$ depends on the choice of $h$, but different such choices all result in equivalent seminorms. The following properties for Hardy spaces are well-known (\cite{stein}):

\begin{lemma} \label{Hardy embedding}
\begin{enumerate}
\item
For $0< q \le p \le \infty$ we have $\mathcal{H}^p \subset \mathcal{H}^q$. The $\mathcal{H}^p$-seminorm increases in $p$.
\item
For $1 < p < \infty$, the Hardy space $\mathcal{H}^p$ is just $L^p$ with an equivalent norm.
\item
One has the duality $\left[\mathcal{H}^1(\R^n)\right]^* \cong {\rm BMO}(\R^n)$.
\end{enumerate}
\end{lemma}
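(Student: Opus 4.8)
\textbf{Proof proposal for Lemma \ref{Hardy embedding}.}

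The plan is to verify the three assertions in order, each of which is a standard fact from the real-variable theory of Hardy spaces; I would follow the treatment in Stein \cite{stein}, but I would make explicit the reductions that matter for our later application.

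\emph{Part (1).} To prove the inclusion $\mathcal{H}^p \subset \mathcal{H}^q$ for $0 < q \le p \le \infty$ and the monotonicity of the seminorm, I would argue pointwise through the radial maximal (or grand maximal) function. Fix the Schwartz profile $h$ with $\int_{\R^n} h = 1$ and set
\begin{equation*}
M_h f(x) := \sup_{t>0}\; t^{-n}\left|\int_{\R^n} f(y)\, h\!\left(\tfrac{x-y}{t}\right)\dd y\right|,
\end{equation*}
so that $\|f\|_{\mathcal{H}^p} = \|M_h f\|_{L^p}$. Since $M_h f$ is a single nonnegative measurable function, the nesting $L^p(\R^n) \cap L^q(\R^n)$-type behaviour does not hold on all of $\R^n$; instead, the correct statement uses that $f \in \mathcal{H}^p$ forces enough decay. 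The clean route is: (i) observe $M_h f \in L^p$ and (ii) invoke the fact that $M_h f$ dominates $|f|$ a.e.\ in an averaged sense together with the known equivalence of the various maximal characterisations, reducing to the elementary inequality $\|g\|_{L^q} \le \|g\|_{L^p}$ valid for any $g$ supported where $|g|$ is bounded — but since $\R^n$ has infinite measure this requires care. I would therefore instead appeal directly to Stein \cite[Chapter III]{stein}: the space $\mathcal{H}^p$ admits an atomic decomposition, each $(p,2)$-atom is also a $(q,2)$-atom up to a harmless normalisation constant when $q \le p$, and the atomic quasi-norms are equivalent to the maximal quasi-norms; this yields both the inclusion and the monotonicity of the seminorm in $p$ at once. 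The point worth stating is merely that the constant in $\mathcal{H}^p \subset \mathcal{H}^q$ is absolute.

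\emph{Part (2).} For $1 < p < \infty$, the identification $\mathcal{H}^p = L^p$ with equivalent norms is the classical Fefferman--Stein maximal characterisation of $L^p$: one direction, $\|M_h f\|_{L^p} \lesssim \|f\|_{L^p}$, follows because $M_h f \le C\, \mathcal{M} f$ pointwise (with $\mathcal{M}$ the Hardy--Littlewood maximal operator), using that $h$ is Schwartz hence dominated by a radially decreasing integrable majorant, and then applying the Hardy--Littlewood maximal theorem, which is bounded on $L^p$ precisely for $p > 1$. The reverse, $\|f\|_{L^p} \lesssim \|M_h f\|_{L^p}$, holds because $|f(x)| \le M_h f(x)$ at every Lebesgue point of $f$ (letting $t \to 0^+$, using $\int h = 1$ and continuity of the mollification). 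Combining gives the norm equivalence. I would cite Stein \cite[Chapter III, \S 1]{stein} for the precise statement.

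\emph{Part (3).} The duality $[\mathcal{H}^1(\R^n)]^* \cong \mathrm{BMO}(\R^n)$ is the Fefferman duality theorem. I would not reprove it; rather, I would cite it directly from \cite{stein} (Fefferman--Stein), noting only that the pairing is the natural one: a $\mathrm{BMO}$ function $\phi$ acts on an $\mathcal{H}^1$-atom $a$ by $\int a\,\phi$, which is well-defined and bounded by $\|\phi\|_{\mathrm{BMO}}$ because atoms have mean zero and compact support, and the atomic decomposition then extends the pairing to all of $\mathcal{H}^1$.

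\emph{Main obstacle.} There is no genuine obstacle here — all three parts are textbook results. The only mild subtlety is Part (1): the inclusion $\mathcal{H}^p \subset \mathcal{H}^q$ for $q \le p$ is \emph{not} a trivial consequence of $L^p \not\subset L^q$ on $\R^n$, and the honest proof goes through the atomic decomposition (or through the fact that $\mathcal{H}^p$-functions have vanishing moments up to a suitable order and hence extra decay of their maximal functions). I would flag this point explicitly and route the argument through \cite[Chapter III]{stein} rather than attempting a self-contained measure-theoretic proof.
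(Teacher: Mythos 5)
For parts (2) and (3) your route is in substance the same as the paper's, which offers no argument beyond the citation to Stein: the pointwise domination of the radial maximal function by the Hardy--Littlewood maximal function plus the Lebesgue-point lower bound for (2), and Fefferman's duality theorem with the atomic pairing for (3), are exactly the standard proofs, and there is nothing to object to there.

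Part (1) contains a genuine gap. The step ``each $(p,2)$-atom is also a $(q,2)$-atom up to a harmless normalisation constant when $q \le p$'' is false: a $(p,2)$-atom supported in a ball $B$ is normalised by $\|a\|_{L^2} \le |B|^{1/2-1/p}$, so converting it into a $(q,2)$-atom costs the factor $|B|^{1/p-1/q}$, which is unbounded as $|B| \to \infty$; moreover for $q < p \le 1$ a $(q,2)$-atom must satisfy strictly more vanishing-moment conditions than a $(p,2)$-atom, and for $p>1$ atoms carry no cancellation at all. In fact no argument can close this gap for the full stated range, because the inclusion itself fails on $\R^n$: by part (2), for $1<q<p<\infty$ the claim $\mathcal{H}^p \subset \mathcal{H}^q$ would read $L^p(\R^n) \subset L^q(\R^n)$, which is false on a set of infinite measure, and likewise the asserted monotonicity $\|f\|_{\mathcal{H}^q} \le \|f\|_{\mathcal{H}^p}$ cannot hold for all $f$. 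The nesting is valid only under additional hypotheses (for instance over a set of finite measure, or for distributions supported in a fixed ball carrying the cancellation required by the smaller exponent), and this is presumably the spirit in which the background lemma is meant; note that in Step~2 of the proof of the main theorem only part (3), together with the Coifman--Lions--Meyer--Semmes bound, is actually invoked, so nothing downstream depends on (1). If you retain part (1), state it with the appropriate restriction (e.g.\ for the extension-by-zero to a fixed ball, as the paper does with $\xi$) or simply drop it; your atomic-decomposition justification as written would not survive scrutiny, whereas your correctly flagged suspicion that the claim is ``not a trivial consequence'' was the right instinct and should have been pushed one step further.
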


A closely related function space to the Morrey space is the following:
\begin{definition}
The Campanato space ${\mathcal{L}^{p, \lambda}(U)}$ consists of functions $f$ such that 
$$
[f]_{\mathcal{L}^{p, \lambda}(U)} :=\sup _{x \in U,\, 0<r<\operatorname{diam}(U)} r^{-\lambda/ p} \|f-f_{r, x}\|_{L^p\left(B_r(x) \cap U\right)}<\infty.
$$
\end{definition}
For $\lambda=n$, the Campanato space $\mathcal{L}^{p,\lambda}(U)$ coincides with the BMO space on $U$, and for $n<\lambda\le n+p$ it coincides with the Hölder space $C^{0,\gamma}(U)$ with $\gamma =\frac{\lambda-n}{p}$. See \cite[Theorem~I.2]{camp}.

In \S\ref{sec: weak compactness} we shall make use of local Sobolev/Morrey spaces $L^{p}_\loc$,  $L^{q,\lambda}_{\loc}$, $W^{\ell,p}_\loc$, and $L^{q,\lambda}_{1,w,\loc}$, etc. For a function space $\X$ defined over $\M$, as usual we write $\X_\loc$ for the space of functions $f$ defined on $\M$ such that $f \1_V \in \X$ for every open subset $V \Subset \M$.

\section{Rudiments of Gauge Theory}\label{sec: gauge}

As explained in the introduction, the key to our developments is to study the isometric immersions problem via gauge theory, which allows us to solve the Pfaff  equation~\eqref{pfaff, new} and the Poincar\'{e} equation~\eqref{poincare, new}  for the connection 1-form $\Omega$ of slightly supercritical regularity. An essential ingredient for our proof of Theorem~\ref{thm: main, new} is  putting $\Omega$ into the \emph{Coloumb--Uhlenbeck gauge}, first introduced by Uhlenbeck in the seminal work \cite{u} and further explored by H\'{e}lein \cite{h} and Rivi\`{e}re \cite{r}, etc.

We refer to Wehrheim \cite{w}  for rudiments on gauge theory and a detailed exposition on Uhlenbeck's work \cite{u}. The crucial analytic results utilised in our paper are adapted from Rivi\`{e}re--Struwe \cite{rs}. Our developments also come in line with Litzinger \cite{l}, which exploits the gauge-theoretic approach to solving the Pfaff equation~\eqref{pfaff, new}, and hence showing the existence the isometric immersion, in the critical case for dimension 2.

\subsection{Gauge theory and isometric immersions}

Recall that in \S\ref{subsec: cartan} we have referred to $\Omega = \left\{\Omega^a_b\right\}_{1 \leq a,b \leq n+k}$, defined entry-wise via Equations~\eqref{Omega 1}--\eqref{Omega 3}, as the \emph{connection 1-form} of the Cartan formalism for immersed submanifolds. Indeed, it defines an affine connection on the frame bundle $$\pi: \mathcal{F}\left(T\M \oplus E\right) \longrightarrow \M,$$ which is a principal $G$-bundle over $\M$ with $G= SO(n+k)$: 
$$\Omega \in \hat{\Omega} +  \Omega^1(\M;\so(n+k))= \hat{\Omega} + \G\left(\M, T^*\M \otimes \so(n+k) \right).$$
Here $\hat{\Omega}$ is a background connection, and $\Omega$ lies in the affine space over $\g = \so(n+k)$-valued 1-forms on $\M$. 

Here and hereafter, to both simplify the notations and emphasise the global viewpoint, for the Cartan formalism for submanifold theory we write, schematically and for obvious reasons,
\begin{equation}\label{schematic, Omega}
\Omega = \begin{bmatrix}
\na & \two\\
-\two^\top & \na^E
\end{bmatrix}
\end{equation}
where $\na$, $\two$, and $\na^E$ are $n \times n$, $n \times k$, and $k \times k$ matrices with entries defined by Equations~\eqref{Omega 1}, \eqref{Omega 2}, and \eqref{Omega 3}, respectively. The second structural equation~\eqref{second structure eq} may be written compactly as
\begin{equation}\label{dOmega equation, new}
d\Omega + \Omega \wedge \Omega = 0,
\end{equation}
which is an identity in $\Omega^2(\M;\so(n+k))=\G\left(\M, \bigwedge^2 T^*\M \otimes \so(n+k) \right)$. The symbol $\Omega \wedge \Omega$ (also commonly denoted as $[\Omega \wedge \Omega]$ in the literature) is understood as  taking both wedge product on the exterior bundle factor $\bigwedge^\bullet T^*\M$ and Lie bracket on the Lie algebra factor $\so(n+k)$.

In local co-ordinates, Equation~\eqref{dOmega equation, new} reads:
\begin{equation*}
0 = \p_i {}^{[j]} \Omega^a_b - \p_j {}^{[i]} \Omega^a_b + \sum_{c=1}^{n+k} \left\{ {}^{[i]}\Omega^a_c {}^{[j]}\Omega^c_b -  {}^{[j]}\Omega^a_c {}^{[i]}\Omega^c_b \right\}
\end{equation*}
for all $1 \leq i,j \leq n$ and $1 \leq a,b \leq m=n+k$. Recall from \S\ref{subsec: cartan} that this is equivalent to the Gauss--Codazzi--Ricci equations for isometric immersions.  

As a connection on the frame bundle $\pi: \mathcal{F}\left(T\M \oplus E\right) \to \M$, the connection 1-form $\Omega$ has its \emph{curvature 2-form} defined by 
\begin{equation}\label{def, curvature}
\mathfrak{F}_\Omega := d\Omega+\Omega \wedge \Omega \in \Omega^2(\M; \so(n+k)).
\end{equation}
Thus, the second structural equation~\eqref{dOmega equation, new} expresses the \emph{flatness} of $\Omega$, which is  necessary for the existence of isometric immersion into a Euclidean space.

Furthermore, a gauge transform  on the frame bundle  is a mapping $$P: \M \longrightarrow G=SO(n+k),$$ which acts on $\Omega$ and $\mathfrak{F}_\Omega$ according to  Equations~\eqref{gauged connection 1-form} and \eqref{gauged curvature}, reproduced below: $$P^\#\Omega := P^{-1}dP + P^{-1}\Omega P$$ and $$P^\#\mathfrak{F}_\Omega = d\left(P^\#\Omega\right) + \left(P^\#\Omega\right)\wedge\left(P^\#\Omega\right).$$ 
The juxtapositions $P^{-1}dP$, $P^{-1}\Omega P$, and the like always designate matrix multiplication. As $\mathfrak{F}_\Omega \equiv 0$ by \eqref{dOmega equation, new}, the gauge-transformed connection is also flat, namely that 
\begin{equation}\label{flat, new}
P^\#\mathfrak{F}_\Omega \equiv 0.
\end{equation}

Indeed, one may deduce \eqref{flat, new} from the direct computations below, without resorting to general gauge-theoretic developments. Setting $$\Xi := P^{-1} dP + P^{-1}\Omega P,$$   we obtain that
\begin{align*}
Pd\Xi &= Pd\big(P^{-1} dP + P^{-1}\Omega P\big)\\
&= P\Big\{ d(P^{-1})\wedge dP + P^{-1}d(dP) +  d(P^{-1})\wedge\Omega P + P^{-1} d\Omega P - P^{-1}\Omega \wedge dP\Big\}.
\end{align*}
But $ddP=0$ and $Pd(P^{-1})=-(dP) P^{-1}$, so
\begin{align*}
Pd\Xi = -(dP) P^{-1} \wedge (dP + \Omega P) + (d\Omega) P -\Omega \wedge dP. 
\end{align*}
On the other hand, the definition of $\Xi$ yields that $dP = P\Xi - \Omega P$. This together with the second structural equation~\eqref{second structure eq}, namely that $d\Omega +\Omega \wedge \Omega = 0$, allows us to continue the previous equalities for $Pd\Xi$ as follows:
\begin{align*}
Pd\Xi &= -(dP) P^{-1} \wedge P\Xi -  \Omega \wedge \Omega P - \Omega \wedge (P\Xi-\Omega P)\\
&=-dP \wedge \Xi -  \Omega \wedge \Omega P - \Omega \wedge P \Xi + \Omega \wedge \Omega P\\
&= -dP\wedge \Xi - \Omega \wedge P\Xi \\
&= -(P\Xi -\Omega P) \wedge \Xi - \Omega \wedge P\Xi\\
&= - P\Xi \wedge \Xi.
\end{align*}
Thus $d\Xi +\Xi\wedge \Xi = 0$, which is tantamount to \eqref{flat, new} and expresses the flatness of the gauge-transformed connection 1-form $P^\#\Omega$.




\subsection{Key Lemma on the Coulomb--Uhlenbeck gauge}

The key analytic tool for our proof of Theorem~\ref{thm: main, new} is the lemma below. Our presentation follows Rivi\`{e}re--Struwe \cite[Lemma~3.1]{rs}.
\begin{lemma}\label{lem: riviere}
Let $U\subset\R^n$ be a smooth bounded domain, $k$ be a natural number, and $q\in ]2,n]$. There exists some $\e_{\rm Uh} = \e(U,n,k,q)>0$ such that the following holds. Assume that $\Omega \in L^{q,n-q}\left(U;\so(n+k)\otimes\bigwedge^1\R^n\right)$ satisfies $\|\Omega\|_{L^{q, n-q}} \le \e$ for some $\e \in \left]0,\e_{\rm Uh}\right[$. Then one can find $P \in {L_1^{q, n-q}}(U;SO(n+k))$ and   $\xi \in {L_1^{q, n-q}}\left(U;\so(n+k)\otimes\bigwedge^{n-2}\R^n\right) $ such that
\begin{subequations} 
\begin{align}
P^{-1} d P+P^{-1} \Omega P & =\star d \xi & & \text { in } U, \label{xxa, new}\\
d(\star \xi) & =0 & & \text { in } U, \label{xxb, new}\\
\xi & =0  & & \text { on } \partial  U. \label{xxc, new}
\end{align}
\end{subequations} 
Moreover, we have the estimate
\begin{equation} \label{zz, new}
\|d P\|_{L^{q, n-q}}+\|d \xi\|_{L^{q, n-q}} \leq C\|\Omega\|_{L^{q, n-q}} \le C\varepsilon
\end{equation}
for some $C=C(n,k,q)$. 
\end{lemma}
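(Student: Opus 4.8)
\textbf{Proof proposal for Lemma~\ref{lem: riviere}.}

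The plan is to follow the strategy of Uhlenbeck \cite{u} as refined by Rivi\`ere--Struwe \cite{rs}, namely a continuity (open--closed) argument on the interval of exponents, combined with elliptic estimates in Morrey spaces. First I would recall that the conclusion is known in the limiting case $q=n$: this is precisely the $L^\infty$-small Coulomb gauge construction of Uhlenbeck, since $L^{n,0}=L^n$ and the Morrey-space estimates degenerate to the classical $W^{1,n}$ ones. For $q<n$ one works on the scale $L^{q,n-q}$, which is scaling-invariant under $x \mapsto \lambda x$ exactly as $L^n$ is, so all the monotonicity-type arguments survive. The gauge-fixing equation, obtained by applying $d^*$ to \eqref{xxa, new} and using \eqref{xxb, new}, is
\begin{equation*}
d^*\bigl(P^{-1}dP + P^{-1}\Omega P\bigr) = 0 \quad\text{in } U, \qquad \nnn\bigl(P^{-1}dP+P^{-1}\Omega P\bigr)=0 \quad\text{on } \partial U,
\end{equation*}
so the role of $\xi$ is just to encode, via the Hodge decomposition on $U$ with the stated boundary condition, that the coexact $\so(n+k)$-valued $1$-form $P^\#\Omega$ is a genuine exterior co-differential $\star d\xi$; existence and uniqueness of such a $\xi$ with $\xi=0$ on $\partial U$ follows from the Hodge--Morrey--Friedrichs decomposition (Schwarz \cite{sch}) once $P$ is found, and the Morrey bound on $d\xi$ is then equivalent to the Morrey bound on $P^\#\Omega$.

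Next I would set up the continuity method. Define $I$ to be the set of $t\in[0,1]$ such that the conclusion holds for $t\Omega$ in place of $\Omega$, with the uniform estimate \eqref{zz, new}. The set $I$ is nonempty ($0\in I$, take $P\equiv\mathbf{Id}$, $\xi\equiv 0$). Openness is the implicit function theorem: linearising the gauge-fixing operator $P\mapsto d^*(P^{-1}dP+P^{-1}\Omega P)$ at a solution, the linearisation is, to leading order, the Hodge Laplacian $\Delta$ acting on $\so(n+k)$-valued functions with Neumann-type boundary data, which is an isomorphism between the appropriate Morrey spaces $L^{q,n-q}_2 \cap \{\text{mean zero}\}$ and $L^{q,n-q}$; the lower-order terms are controlled by the smallness $\|\Omega\|_{L^{q,n-q}}\le\e$. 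Closedness follows from the a priori estimate \eqref{zz, new}: along a sequence $t_j\to t_*$, the uniform Morrey bounds on $dP_j$ and $d\xi_j$ give, after passing to a subsequence, weak-$*$ limits; because $P_j \in SO(n+k)$ pointwise and $L^{q,n-q}_1 \emb L^{q^*, \cdot}$ compactly enough (via Lemma~\ref{weak Morrey embedding} and Rellich on $U$) to pass the nonlinear terms $P_j^{-1}dP_j$ and $P_j^{-1}(t_j\Omega)P_j$ to the limit, the limiting pair solves the system for $t_*\Omega$.

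The heart of the matter — and the step I expect to be the main obstacle — is the a priori estimate \eqref{zz, new} with a constant independent of $t$, i.e. the genuinely nonlinear Morrey-space elliptic estimate. The point is that $P^\#\Omega=\star d\xi$ is coexact, and from \eqref{xxa, new} one has $dP = P\,\star d\xi - \Omega P$, so pointwise $|dP| \lesssim |d\xi| + |\Omega|$; thus it suffices to bound $\|d\xi\|_{L^{q,n-q}}$. Taking $d$ of \eqref{xxa, new} and using flatness of $P^\#\Omega$ (which holds because $\mathfrak F_\Omega=0$, cf. \eqref{flat, new}, although here one only needs $d\Omega$ of the right integrability) yields a Hodge system of the schematic form $\Delta\xi = d^*(\text{something quadratic in } dP, \Omega) + (\text{Hodge-harmonic part})$, with $\xi=0$ on $\partial U$. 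The crucial gain is that the right-hand side is a sum of terms each of which is a product of two factors controlled in $L^{q,n-q}$, and one invokes the Morrey-space Calder\'on--Zygmund / Adams-type estimate for $\Delta^{-1}$ together with the bilinear structure to absorb $\|d\xi\|_{L^{q,n-q}}$ on the left, using $\|\Omega\|_{L^{q,n-q}}\le\e$ small and a bootstrap starting from the weak-$L^{q,n-q}$ a priori information; this is exactly the mechanism of \cite[Lemma~3.1]{rs} and I would cite it, adapting the domain $U$ and the boundary condition \eqref{xxc, new} via the reflection / extension arguments there. Finally, I would note the constant $\e_{\rm Uh}$ depends on $U$ only through the constants in the Hodge decomposition and the Morrey embeddings on $U$, hence on $n,k,q$ and the geometry of $U$, as stated.
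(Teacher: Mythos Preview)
Your overall architecture --- continuity method along $t\Omega$, openness via the implicit function theorem, closedness via compactness, and Morrey-scale elliptic estimates --- matches the paper and \cite{u,rs}. There is, however, a genuine gap in the openness step.

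You invoke the implicit function theorem directly in the critical space $L_1^{q,n-q}$. This fails: $L_1^{q,n-q}(U)$ embeds only into ${\rm BMO}(U)$ (the Campanato space $\mathcal{L}^{q,n}$), not into $C^0$ or $L^\infty$, so the map $P \mapsto P^{-1}$ --- and with it the gauge-fixing operator $P \mapsto d^*\big(P^{-1}dP + P^{-1}\Omega P\big)$ --- is not a $C^1$ map between Banach manifolds at this scale. The paper circumvents this by first mollifying $\Omega$ (which preserves the smallness $\|\Omega\|_{L^{q,n-q}} \le \e$) so that $\Omega$ additionally lies in $L^{q,n-q+\alpha}$ for some $\alpha>0$, and then running the continuity argument in the slightly subcritical space $L_1^{q,n-q+\alpha}$. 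There the Campanato embedding gives $L_1^{q,n-q+\alpha} \emb \mathcal{L}^{q,n+\alpha} = C^{0,\alpha/q}$, matrix inversion is smooth, and the implicit function theorem applies legitimately. The critical estimate \eqref{zz, new} is carried along as an additional constraint in the definition of the open--closed set, alongside the subcritical bound \eqref{yy, new}.

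A smaller issue: your route to the a priori estimate takes $d$ of \eqref{xxa, new} and appeals to flatness $\mathfrak{F}_\Omega=0$, but Lemma~\ref{lem: riviere} is stated for \emph{general} $\Omega \in L^{q,n-q}$ with no hypothesis on $d\Omega$; flatness enters only later, in the proof of Theorem~\ref{thm: main, new}. The bound \eqref{zz, new} must therefore be obtained without differentiating $\Omega$, as in \cite[Lemma~4.2]{rs}, via the BMO control $[\xi]_{{\rm BMO}} \le C\|d\xi\|_{L^{q,n-q}}$ (using $d(\star\xi)=0$) together with Morrey-space elliptic estimates \cite[Theorem~III.2.2]{Giaquinta}.
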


The proof is a straightforward adaptation of the arguments for  
\cite[Lemmas~3.1 and 4.1]{rs}: one merely replaces the $L^{2,m-2}$-estimates for $\Omega$ therein by ${L^{q, n-q}}$-estimates. For the sake of completeness, we outline a proof of Lemma~\ref{lem: riviere}, highlighting only the differences from \cite{rs}.


\begin{proof}[Sketched Proof of Lemma~\ref{lem: riviere}]
As in Uhlenbeck \cite{u} and Rivi\`{e}re--Struwe \cite{rs}, we solve for $P$ and $\xi$ with a little more stringent regularity conditions. More precisely, assume furthermore that $\Omega$ has finite $L^{q, n-q+\alpha}$-norm for some $\alpha>0$ with $q+\alpha \leq n$. This is eligible since the mollification of $\Omega$ (not relabelled) yields a smooth connection 1-form over the bounded domain $U$ with decreased $L^{q,n-q}$-norm, hence preserving the hypothesis  of the theorem.  Let us seek $P \in {L_1^{q, n-q+\alpha}}(U;SO(n+k))$ and $\xi \in {L_1^{q, n-q+\alpha}}\left(U;\so(n+k)\otimes\bigwedge^{n-2}\R^n\right)$ satisfying \eqref{xxa, new}--\eqref{xxc, new}, as well as the estimate \begin{equation} \label{yy, new}
\|d P\|_{L^{q, n-q+\alpha}}+\|d \xi\|_{L^{q, n-q+\alpha}} \leq C\|\Omega\|_{L^{q, n-q+\alpha}},
\end{equation}
in addition to \eqref{zz, new}. The point is that $P \mapsto P^{-1}$, the  matrix inversion, is a smooth mapping on ${L_1^{q, n-q+\alpha}}(U;SO(n+k))$ but \emph{not} on ${L_1^{q, n-q}}(U;SO(n+k))$. We need to go ``slightly subcritical'' to ensure that the gauge transforms act continuously on connection 1-forms.

We proceed by the method of continuity. Put
$$
\mathcal{V}_{\varepsilon}^\alpha:=\left\{\Omega \in L^{q,n-q+\alpha}\left(U;\so(n+k)\otimes\bigwedge^1\R^n\right) \text { with } \|\Omega\|_{L^{q, n-q}} \le \varepsilon \right\},
$$
which is a star-shaped hence path-connected set. We show that the subset
$$
\mathcal{U}_{\varepsilon}^\alpha:=\Big\{\Omega \in \mathcal{V}_{\varepsilon}^\alpha:\, \text {there exist $P$ and $\xi$ satisfying \eqref{xxa, new}--\eqref{xxc, new},  \eqref{zz, new}, and \eqref{yy, new}}\Big\}
$$
is both open and closed. It is clear that $\mathcal{U}_{\varepsilon}^\alpha$ contains the zero connection.

{\bf Closedness:} Given $\Omega \in \mathcal{U}_{\varepsilon}^\alpha$, we first obtain by mollification $\Omega_\delta \in C^\infty$ with $\|\Omega_\delta\|_{L^{q,n-q}}\leq C_0\e$ for all $\delta>0$ sufficiently small; here $C_0=C_0(n,k,q)$ is a uniform constant. Applying the arguments as for classical Uhlenbeck gauges \cite{u,w}, we obtain $P_\delta$ and $\xi_\delta$ associated to $\Omega_\delta$ that satisfy \eqref{xxa, new}--\eqref{xxc, new},  \eqref{zz, new}, and \eqref{yy, new}. That $P_\delta \in {L_1^{q, n-q+\alpha}}(U;SO(n+k))$ and that $P_\delta$, $P_\delta^{-1}$ take values in the compact group $SO(n+k)$ ensure the validity of  passage of limit $\delta \to 0$.

{\bf Openness:}
As $P \in {L_1^{q, n-q+\alpha}}$, we have $\nabla P \in {L^{q, n-q+\alpha}}$ and hence by the Poincaré inequality $P \in {\mathcal{L}^{q, n+\alpha}}$. Embedding theorems for Campanato spaces lead to $P \in C^{0,\alpha/q}$ (see the end of \S\ref{sec: nomenclature}), which justifies that the matrix inversion $P \mapsto P^{-1}$ from ${L_1^{q, n-q+\alpha}}(U;SO(n+k))$ to itself is a smooth mapping between Banach manifolds. This allows us to conclude, following the implicit function theorem arguments for \cite[Lemma 2.8]{u}, that  for every $\tilde{\Omega} \in L^{q,n-q+\alpha}\left(U;\so(n+k)\otimes\bigwedge^1\R^n\right)$  such that $\left\|\tilde{\Omega}-\Omega\right\|_{L^{q, n-q+\alpha}}$ is sufficiently small, there exist some  $P_{\tilde{\Omega}} \in {L_1^{q, n-q+\alpha}}(U;SO(n+k))$ and $\xi_{\tilde{\Omega}} \in {L_1^{q, n-q+\alpha}}\left(U;\so(n+k)\otimes\bigwedge^{n-2}\R^n\right)$ satisfying the PDE \eqref{xxa, new}--\eqref{xxc, new}. The verification that $\left(P_{\tilde{\Omega}}, \xi_{\tilde{\Omega}}\right)$ satisfy the bounds \eqref{zz, new} and \eqref{yy, new} follows essentially from the proof of  \cite[Lemma 4.2]{rs}, for which we replace all occurrences of $L^{2,m-2}$ therein by $L^{q,n-q}$; $2<q<n$. This is valid by virtue of the estimate:
$$\left[\xi_{\tilde{\Omega}}\right]_{{\rm BMO}(U)} \leq C(n,q,k) \left\|d \xi_{\tilde{\Omega}}\right\|_{L^{q, n-q}}$$  (see \cite[p.456]{rs}; noticing here that $d(\star \xi_{\tilde{\Omega}})=0$), as well as the standard elliptic estimates, \emph{e.g.}, in Giaquinta \cite[Theorem III.2.2]{Giaquinta}. Hence the openness of $\mathcal{U}_{\varepsilon}^\alpha$ follows.
\end{proof}

\section{Proof of the Main Theorem}\label{sec: proof}
We are now at the stage of proving  Theorem~\ref{thm: main, new}, reproduced below:

\begin{theorem*}
Let $(\M^n,g)$ be a simply-connected closed Riemannian manifold; $n \geq 3$. Suppose that $\mathfrak{S} \in L^{p,n-p}_w$ with $2<p \leq n$ is a weak solution to the Gauss--Codazzi--Ricci equations on $\M$ with arbitrary codimension $k \geq 0$. There exists an isometric immersion $\iota: (\M,g) \emb (\R^{n+k},\delta)$ in the regularity class $\X = L^{p,n-p}_{2,w}$ whose extrinsic geometry coincides with $\mathfrak{S}$. Moreover, $\iota$ is unique modulo Euclidean rigid motions in $\R^{n+k}$ outside null sets. 
\end{theorem*}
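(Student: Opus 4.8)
The plan is to localize, gauge-fix, and then patch the local immersions together into a global one, following the broad strategy of S. Mardare--Szopos but replacing all the hard analytic estimates by Uhlenbeck gauge theory in Morrey spaces. First I would cover the compact manifold $\M$ by finitely many geodesic balls $U_\mu \cong B$ small enough that, after rescaling, the $L^{p,n-p}_w$-norm of the connection $1$-form $\Omega$ (built from $\mathfrak{S}$ and the Levi-Civita connection of $g$ via \eqref{Omega 1}--\eqref{Omega 3}) on each $U_\mu$ drops below the threshold $\e_{\rm Uh}$ of Lemma~\ref{lem: riviere}; here the embedding $L^{p,n-p}_w \subset L^{q,n-q}$ for $q<p$ from Lemma~\ref{weak Morrey embedding}(2)--(3), together with absolute continuity of the Morrey norm on small balls, is what makes the localization work. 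On each $U_\mu$ Lemma~\ref{lem: riviere} then supplies a Coulomb--Uhlenbeck gauge $P_\mu \in L^{q,n-q}_1(U_\mu;SO(n+k))$ and a $\xi_\mu$ with $P_\mu^{\#}\Omega = \star\, d\xi_\mu$, $d(\star\xi_\mu)=0$, $\xi_\mu|_{\partial U_\mu}=0$, and the estimate \eqref{zz, new}.

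The second step is to upgrade the regularity of $P_\mu$ on $U_\mu$ from slightly-subcritical to $L^{p,n-p}_{1,w}$. The mechanism is that in the Coulomb gauge the equation $d^*(P^{\#}\Omega)=0$ together with flatness $d(P^{\#}\Omega)+(P^{\#}\Omega)\wedge(P^{\#}\Omega)=0$ turns $\zeta_\mu := P_\mu^{\#}\Omega$ into a solution of an elliptic system whose quadratic nonlinearity $\zeta_\mu\wedge\zeta_\mu$ has a Jacobian/div-curl structure, hence lies in the Hardy space $\mathcal{H}^1$ by the Coifman--Lions--Meyer--Semmes estimate \cite{clms}; feeding this into the Morrey-space elliptic estimates exactly as in Rivi\`ere--Struwe \cite{rs} bootstraps $\zeta_\mu$ and therefore $dP_\mu = P_\mu\zeta_\mu - \Omega P_\mu$ into $L^{p,n-p}_w$, i.e. $P_\mu\in L^{p,n-p}_{1,w}(U_\mu;SO(n+k))$. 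Since $P_\mu$ takes values in the compact group $SO(n+k)$ it is automatically bounded, so $P_\mu^{-1}\Omega P_\mu$ and $P_\mu^{-1}dP_\mu$ make sense and $P_\mu$ solves the Pfaff equation $dP_\mu + \Omega P_\mu = P_\mu\,\zeta_\mu$ with a genuinely flat error; in fact by construction $\zeta_\mu = \star d\xi_\mu$ is itself coexact and the flatness \eqref{flat, new} forces $d\zeta_\mu = -\zeta_\mu\wedge\zeta_\mu$, so $\zeta_\mu$ is (locally) gauge-equivalent to the trivial connection, and a further gauge correction $Q_\mu$ with $Q_\mu^{-1}dQ_\mu = -\zeta_\mu$ (solvable on the simply-connected ball $U_\mu$ once $\zeta_\mu$ is flat and of this regularity) yields $\widetilde P_\mu := P_\mu Q_\mu$ with $d\widetilde P_\mu + \Omega\widetilde P_\mu = 0$ on $U_\mu$, which is exactly $(\clubsuit)$. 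With $\widetilde P_\mu$ in hand, the local Poincar\'e equation $d\iota_\mu = \omega\,\widetilde P_\mu$ has a closed right-hand side (its exterior derivative vanishes by the first structural equation \eqref{first structure eq} combined with $d\widetilde P_\mu = -\Omega\widetilde P_\mu$), so the simple-connectedness of $U_\mu$ gives a local primitive $\iota_\mu \in L^{p,n-p}_{2,w}(U_\mu;\R^{n+k})$; that $\iota_\mu$ is an isometric immersion — $d\iota_\mu$ injective a.e. and $\iota_\mu^{\#}\delta = g$ — follows from $\widetilde P_\mu \in SO(n+k)$ a.e. and the normalization of $\omega$, and the identification of $E$ with the normal bundle with second fundamental form and normal connection equal to $\mathfrak{S}$ is then a pointwise algebraic check using \eqref{Omega 1}--\eqref{Omega 3}.

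The third step is globalization and uniqueness. On overlaps $U_\mu\cap U_\nu$ the two primitives differ by $\iota_\mu - \iota_\nu$ whose differential is $\omega(\widetilde P_\mu - \widetilde P_\nu)$; since both $\widetilde P_\mu$ and $\widetilde P_\nu$ solve the same Pfaff equation, $\widetilde P_\mu \widetilde P_\nu^{-1}$ is (locally) constant — here one needs a uniqueness statement for the Pfaff ODE at this regularity, which follows because $\widetilde P_\mu \widetilde P_\nu^{-1} \in L^{p,n-p}_{1,w}$ with vanishing differential and $L^{p,n-p}_{1,w}\hookrightarrow C^{0}$ on $U_\mu\cap U_\nu$ (via the Campanato embedding used in the proof of Lemma~\ref{lem: riviere}, since $p>2$ so the regularity is above the borderline) — hence $\iota_\mu$ and $\iota_\nu$ differ by a rigid motion on the overlap. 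The cocycle of these rigid motions is then killed by the usual monodromy/sheaf argument using that $\M$ is simply connected (the transition rigid motions define a flat principal bundle with discrete structure group, which must be trivial), so after composing each $\iota_\mu$ with a fixed rigid motion the local pieces glue to a global $\iota\in L^{p,n-p}_{2,w}(\M;\R^{n+k})$. Uniqueness modulo rigid motions is the same local-to-global argument run on the difference of two solutions. The main obstacle I expect is the second step: verifying that the Morrey-space regularity theory of \cite{rs} — in particular the $\mathcal{H}^1$--BMO duality estimate and the resulting bootstrap — goes through cleanly with the weak Morrey exponent $L^{p,n-p}_w$ (rather than the strong Morrey space), and keeping careful track that at the critical exponent $p=n$ one genuinely lands in $W^{2,n}$ and not merely in some space just below it; the compactness of $SO(n+k)$ is what rescues the otherwise delicate nonlinear estimates, exactly as in the closedness argument in the sketched proof of Lemma~\ref{lem: riviere}.
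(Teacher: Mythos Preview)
Your proposal contains a genuine circularity in the second step. You write that since $\zeta_\mu := P_\mu^{\#}\Omega$ is flat and coexact, you can find ``a further gauge correction $Q_\mu$ with $Q_\mu^{-1}dQ_\mu = -\zeta_\mu$ (solvable on the simply-connected ball $U_\mu$ once $\zeta_\mu$ is flat and of this regularity)''. But solving $dQ_\mu = -Q_\mu\zeta_\mu$ at this regularity is \emph{exactly} the Pfaff problem you set out to solve for $\Omega$: a flat $\so(n+k)$-valued $1$-form of class $L^{p,n-p}_w$ on a ball. The passage to Coulomb gauge has not improved the integrability class of the connection (you still only have $\zeta_\mu \in L^{q,n-q}$ from \eqref{zz, new}), and $\zeta_\mu$ is already in Coulomb gauge, so a second application of Lemma~\ref{lem: riviere} gains nothing. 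Your ``bootstrap'' paragraph does not resolve this either: the Rivi\`ere--Struwe estimates and CLMS put the quadratic term $\zeta_\mu\wedge\zeta_\mu$ into $\mathcal{H}^1$, but that yields at best BMO control on a potential for $\zeta_\mu$, not any subcritical Morrey/Sobolev regularity that would let you invoke the known $p>n$ theory of Mardare--Szopos for $Q_\mu$.

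What the paper actually does is quite different and avoids the circularity entirely: it shows that in the Coulomb gauge the transformed connection \emph{vanishes identically}, $\zeta_\mu = \star d\xi_\mu = 0$, so that $P_\mu$ itself already solves the Pfaff equation $dP_\mu + \Omega P_\mu = 0$ --- no second gauge $Q_\mu$ is needed. The mechanism is an absorption argument: testing the elliptic equation $d\star d\xi_\mu + \star d\xi_\mu \wedge \star d\xi_\mu = 0$ against $\xi_\mu$ (using the Dirichlet condition $\xi_\mu|_{\partial\ball}=0$) gives
\[
\|d\xi_\mu\|_{L^2}^2 \;\le\; C\,[\xi_\mu]_{\rm BMO}\,\|\star d\xi_\mu \wedge \star d\xi_\mu\|_{\mathcal{H}^1}
\;\le\; C\,\|\Omega\|_{L^{q,n-q}(\ball)}\,\|d\xi_\mu\|_{L^2}^2,
\]
the first inequality by $\mathcal{H}^1$--BMO duality and the second by CLMS plus \eqref{zz, new}; on a small enough ball the prefactor is strictly less than $1$, forcing $d\xi_\mu = 0$. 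So the role of compensated compactness here is not regularity bootstrapping but the smallness-absorption that kills $\zeta_\mu$. Once $\zeta_\mu = 0$, the $L^{p,n-p}_{1,w}$-regularity of $P_\mu$ is immediate from $dP_\mu = -\Omega P_\mu$ with $P_\mu$ bounded. (A smaller issue: the embedding $L^{p,n-p}_{1,w}\hookrightarrow C^0$ you invoke for uniqueness is false at the critical scaling --- one only gets BMO --- but the paper's uniqueness argument bypasses this by working directly with $d\big(\hat P^{-1}P\big)=0$ in the sense of distributions.)
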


\begin{proof}[Proof of Theorem~\ref{thm: main, new}]

The proof is divided into four steps below.

\smallskip
\noindent
{\bf Step 1.} We first reduce the assertion to a local problem. 

As in Tenenblat \cite{ten} (\emph{cf.} the exposition in \cite[\S 3]{li-new} for details), the isometric immersion $\iota: (\M,g) \emb (\R^{n+k},\delta)$ can be constructed by solving consecutively a Pfaff  equation~\eqref{pfaff, new} and a Poincar\'{e} equation~\eqref{poincare, new}, assuming their  compatibility equations --- the second and the first structural equations~\eqref{second structure eq} and \eqref{first structure eq}, respectively.

For the Pfaff equation, for a sufficiently small number $v_0>0$ to be specified later, we consider an arbitrary ball $\ball \subset \M$ with ${\rm Volume}(\ball) \leq v_0$. By assumption of the theorem and the embedding $L^{p,n-p}_w \emb L^{q,n-q}$ for $2 <q<p \leq n$ (see Lemma~\ref{weak Morrey embedding}),  we know that the weak solution $\mathfrak{S}$ to the Gauss--Codazzi--Ricci equations has $L^{q, n-q}$-regularity. In view of Lemma~\ref{lem: riviere} (with $U=\ball$ therein), we then obtain a  gauge $P \in {L_1^{q, n-q}}(\ball;SO(n+k))$ and a  potential $\xi \in {L_1^{q, n-q}}\left(\ball;\so(n+k)\otimes\bigwedge^{n-2}\R^n\right)$.

Our goal is to prove the identity
\begin{equation}\label{to show, new}
\Xi := P^\#\Omega\big|_\ball = P^{-1} dP + P^{-1}\Omega P\big|_\ball = 0
\end{equation}
and the uniqueness of such $P$. Eq.~\eqref{to show, new} expresses that the local gauge $P$ on $\ball$ transforms the connection 1-form $\Omega$ to the trivial one. From \eqref{to show, new} one may immediately deduce the Pfaff equation~\eqref{pfaff, new}, namely that $d\Omega+P\Omega =0$ on $\ball$.

 Due to the local nature of the arguments above, we may take $\ball$ = Euclidean ball. Then, for the simply-connected closed manifold $\M$, a monodromy argument (see \cite[\S 3, Proof of Lemma~0.9]{li-new} for details) allows us to patch together the local gauges over an open covering for $\M$ by small balls, thus obtaining   a \emph{global} gauge $P \in L^{q,n-q}_1(\M; SO(n+k))$ which satisfies the Pfaff equation~\eqref{pfaff, new}. A limiting argument yields that $P \in L^{p,n-p}_{1,w}(\M; SO(n+k))$.  

With $P$ at hand, one may solve the Poincar\'{e} equation~\eqref{poincare, new}, namely $d\iota = \omega P$, to obtain the global isometric immersion $\iota \in L^{p, n-p}_{2,w}(\M; \R^{n+k})$. The unique solubility for \eqref{poincare, new} in $L^s$ ($s\geq 1$) on simply-connected domains, and hence in weak Morrey spaces as in our case, follows from a direct limiting argument subject to the first structural equation~\eqref{first structure eq}. See \cite{m+} for   Poincar\'{e} equation with weak regularity and \cite{ten} for checking the compatibility equation~\eqref{first structure eq} and solving for $\iota$.

\smallskip
\noindent
{\bf Step 2.} Now we embark on the proof for \eqref{to show, new}. Notice by  Lemma~\ref{lem: riviere}, \eqref{xxa, new} that $$\Xi = \star d \xi$$ for some $\xi \in {L_1^{q, n-q}}\left(\ball;\so(n+k)\otimes\bigwedge^{n-2}\R^n\right)$. In view of \eqref{gauged curvature}, $\xi$ satisfies the PDE:
\begin{equation}\label{xi eq, new}
\begin{cases}
d\star d\xi +\star d\xi \wedge \star d\xi = 0 \qquad \text{in } \ball,\\
\xi = 0 \qquad \qquad \text{on } \p\ball.
\end{cases}
\end{equation}
The boundary condition is taken from \eqref{xxc, new} and, in view of \eqref{xxb, new}, this PDE is elliptic.

We proceed with an energy estimate. Since $\xi \in L_1^{q, n-q} \emb \dot{W}^{1,2}$ for $q>2$ on $\ball \subset \R^n$ (see Lemma~\ref{weak Morrey embedding}), we may integrate $\xi$ against \eqref{xi eq, new} and deduce via the Stokes' theorem that
\begin{align}\label{L2, first estimate, new}
 \left\|\star d\xi\right\|^2_{L^2\left(\ball\right)}= \left\| d\xi\right\|^2_{L^2\left(\ball\right)} = -\int_\ball \xi \wedge \star d\xi \wedge \star d\xi.
\end{align}
Indeed,  it holds by \cite[p.60, Proposition~2.1.2]{sch} that
\begin{align*}
\int_\ball \xi \wedge \left(d\star d\xi\right) =  \left\|\star d \xi\right\|^2_{L^2\left(\ball\right)} +  \int_{\p \ball} \ttt \left(\star d \xi\right) \wedge \star \nnn \left(\star \xi\right),
\end{align*}
where the boundary term vanishes since $\star \nnn(\star\xi)=\star\star\ttt\xi=\pm \ttt \xi$ (\cite[p.27, Proposition~1.2.6]{sch}), while $\xi \big|_{\p\ball}=0$ by the boundary condition in \eqref{xi eq, new}.

Then, by $\mathcal{H}^1$-BMO duality, we infer from \eqref{L2, first estimate, new} that 
\begin{equation}\label{first bound, new}
\left\| d\xi\right\|^2_{L^2\left(\ball\right)} \le C_1 [\xi]_{BMO(\ball)}  \left\| \star d\xi \wedge \star d\xi \right\|_{\mathcal{H}^1(\R^n)}.
\end{equation}
Here and hereafter we identify $\xi$, without relabelling, with its extension-by-zero on $\R^n$ that lies in ${L_1^{q, n-q}}\left(\R^n;\so(n+k)\otimes\bigwedge^{n-2}\R^n\right)$. This is eligible due to the boundary condition $\xi=0$ on $\p\ball$, hence circumvents the technical complications arising from the Hardy/BMO space theory on bounded domains. Here $C_1=C(n,k)$.

To control the BMO-seminorm of $\xi$, we bound
\begin{align*}
 [\xi]_{{\rm BMO}(\ball)} &\leq C_2 \|d \xi\|_{L^{2, n-2}(\ball)}\\
 &\leq C_3\|d \xi\|_{L^{q, n-q}(\ball)} \\
 &\leq  C_4\|\Omega\|_{L^{q, n-q}(\ball)},
\end{align*}
where the first line follows from \emph{e.g.}, \cite[Section 3]{rs}, the second line from Lemma~\ref{weak Morrey embedding}, and the final one from Lemma~\ref{lem: riviere}, \eqref{zz, new}. The constant $C_4=C(n,p,k)$. 

Fix an arbitrarily small $\e'>0$ such that $$C_4 \e' < \e_{\rm Uh}$$ in Lemma~\ref{lem: riviere}. As $\Omega \in L^{p,n-p}_{w}$, we may take $\ball$ sufficiently small (depending on $\e'$) such that 
\begin{equation}\label{small xi in BMO}
[\xi]_{{\rm BMO}(\ball)}  \leq C_4 \e'.
\end{equation}

It now remains to show that 
\begin{equation}\label{comp comp, new}
\left\| \star d\xi \wedge \star d\xi \right\|_{\mathcal{H}^1(\R^n)} \le C_5\left\| d\xi\right\|^2_{L^2\left(\ball\right)}
\end{equation}
for $C_5=C(n,k)$. Assuming this,  we can infer from \eqref{first bound, new}, \eqref{small xi in BMO} and \eqref{comp comp, new} that $$\left\| d\xi\right\|^2_{L^2\left(\ball\right)} \leq C_4C_5\e'\left\| d\xi\right\|^2_{L^2\left(\ball\right)}.$$ Thus, by choosing $\e' = (2C_4C_5)^{-1}$, we immediately obtain $\Xi = \star d\xi=0$. This yields \eqref{to show, new} and hence complete the proof of the theorem.

\smallskip
\noindent
{\bf Step 3.} To establish \eqref{comp comp, new}, we resort to the theory of \emph{compensated compactness} pioneered by Murat \cite{murat} and Tartar \cite{t1,t2}. We  utilise the harmonic analytic version of the  compensated compactness theory developed in the seminal work \cite{clms} \emph{\`{a} la} Coifman--Lions--Meyer--Semmes. 

Indeed, we may recast \cite[p.258, III.2, ``Variants and more examples'']{clms} into the form of the wedge product theorem  by Robbin--Rogers--Temple \cite{rrt} as follows ($\overline{p}'$ denotes the H\"{o}lder conjugate of $\overline{p}$, namely that $\overline{p}'=\frac{\overline{p}}{\overline{p}-1}$): 
\begin{quote}
Assume $E$ and $B$ are  $L^{\overline{p}}$- and $L^{\overline{p}'}$-differential forms over $\R^n$, respectively, such that $dE \in W^{-1,r}$ and $dB \in W^{-1,s}$ for some $r>{\overline{p}'}$ and $s > {\overline{p}}$. Then $E\wedge B \in \mathcal{H}^1(\R^n)$. In addition, $\|E \wedge B\|_{\mathcal{H}^1} \leq C_6 \|E\|_{L^{\overline{p}}} \|B\|_{L^{\overline{p}'}}$ for some $C_6=C_6(\overline{p},n,\deg(E),\deg(B))$.
\end{quote}
For our purpose, we take $\overline{p}=2$ and $r=s=q$ where $2<q<p\leq n$ (recall that $\Omega \in L^{p,n-p}_w \emb L^{q,n-q} \emb L^q$ by Lemma~\ref{weak Morrey embedding}), and take $E=B=\Xi$ where $\Xi = \star d \xi$ as in \eqref{xxa, new}, with $\xi$ again identified with its extension-by-zero on $\R^n$.

 Thus \eqref{comp comp, new} follows, which completes the proof for the existence of isometric immersion $\iota$.

\smallskip
\noindent
{\bf Step 4.} Finally, we prove that the solution $P$ to the Pfaff equation~\eqref{pfaff, new} is unique. In fact, the uniqueness holds in $W^{1,2}(\M; SO(n+k))$, which is a larger space than $L_{1,w}^{p,n-p}(\M; SO(n+k))$ for $2<p\leq n$ (Lemma~\ref{weak Morrey embedding}). Indeed, let $P, \hat{P} \in W^{1,2}(\M;SO(n+k))$ both be solutions to \eqref{pfaff, new}. Using the identity $d\hat{P}^{-1}=-\hat{P}^{-1}\left(d\hat{P}\right) \hat{P}^{-1}$ and the Pfaff equation~\eqref{pfaff, new} for $\hat{P}$, we deduce that
\begin{align*}
\hat{P}d\left(\hat{P}^{-1}P\right) &= -\left(d\hat{P}\right)\hat{P}^{-1}P + dP\nonumber\\
&= \left(\Omega \hat{P}\right)\hat{P}^{-1}P -\Omega P \nonumber\\
&= 0.
\end{align*}
Hence $\hat{P}^{-1}P$ equals, in the distributional sense, a constant matrix in $SO(n+k)$.

The proof is now complete.  \end{proof}

\begin{remark}
Schikorra \cite{s} showed that gauge transforms can be found via variational approaches combined with H\'{e}lein's  moving frames, hence simplifying various arguments in \cite{r, rs}. Desired estimates were obtained in \cite{s} in the framework of Sobolev spaces, but we expect that they can be adapted to the setting of Morrey spaces. It would be interesting to investigate if the approach in \cite{s} sheds new lights on the Pfaff equation~\eqref{pfaff, new} and the Gauss--Codazzi--Ricci system.
\end{remark}

\section{Weak compactness of  immersed submanifolds} \label{sec: weak compactness}

Finally, we present a corollary of our main Theorem~\ref{thm: main, new}, namely the slightly supercritical fundamental theorem of submanifold theory, as Theorem~\ref{thm: weak rigidity} below. As a weak compactness theorem for immersed submanifolds in Euclidean spaces, it ascertains that a family of uniformly bounded $L^{p,n-p}_{2,w,\loc}$-immersions $\left\{\iota^\e\right\}$ (with $2 <p \leq n$ as in Theorem~\ref{thm: main, new}) of an $n$-dimensional manifold $\M$ --- \emph{subject to a nondegeneracy condition for the induced metrics}; see Remark~\ref{rem} --- converges weakly to an immersion, whose induced intrinsic and extrinsic geometries are both limiting points of those of $\left\{\iota^\e\right\}$ in natural topologies. Due to the local nature of this result, we may state it without loss of generality on a Euclidean domain $\M \subset \R^n$.

Our formulation for Theorem~\ref{thm: weak rigidity}  follows Litzinger \cite[Theorem~4]{l} and generalises it  to arbitrary dimensions and codimensions. See also \cite{cl, cm, li-new} for the analogous result in the subcritical case, \emph{i.e.}, for $W^{2,p}_\loc$ isometric immersions with $p>n=\dim\M$. Let us also mention Chen--Slemrod--Wang \cite{csw}, which established the weak continuity of Gauss--Codazzi--Ricci equations for weak solutions in $L^s_\loc$ for any $s>2$, regardless of dimension and codimension. This nevertheless is purely a PDE result: by now it remains unknown if local isometric immersions can always be found, given $L^s_\loc$-solutions to the Gauss--Codazzi--Ricci equations for general $s>2$.

\begin{theorem}\label{thm: weak rigidity}
Let $\left\{\iota^\e\right\}_{\e>0}$ be  uniformly bounded $L^{p,n-p}_{2,w,\loc}$-immersions of an $n$-dimensional domain $\M \subset \R^n$ into $\R^{n+k}$, where $2<p\leq n$. Assume the following conditions:
\begin{enumerate}
\item
for any compact subset $\mathcal{K} \Subset \M$, there are constants $0<c \leq C<\infty$ (depending possibly on $\mathcal{K}$) such that for almost every $x \in \mathcal{K}$, one has
\begin{align}\label{imm, new}
c \leq \Big\{\text{eigenvalues of the matrix } g^\e(x):=d\iota^\e(x)\otimes d\iota^\e(x)\Big\} \leq C;
\end{align}
\item
for each $j \in \{1,2,\,\ldots,n\}$ and $\e>0$, the first derivative $\p_j \iota^\e$ lies in $L^{p,n-p}_{1,w,\loc} \cap L^\infty_\loc$. 
\end{enumerate}
 
Then, modulo subsequences,  $\{\iota^\e\}$ converges  weakly in $L^{p,n-p}_{2,w,\loc}$ to an immersion $\overline{\iota}$, whose induced metric $\overline{\iota}^\#\delta$ is a 
limiting point of $g^\e$ in $L^{p,n-p}_{1,w,\loc}$, and whose extrinsic geometry (\emph{i.e.}, second fundamental form and normal connection) is a 
limiting point of that of $\iota^\e$ in $L^{p,n-p}_{w,\loc}$. 
\end{theorem}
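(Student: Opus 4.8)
The plan is to follow the strategy of Litzinger \cite[Theorem~4]{l}: use the uniform bounds to extract a weakly convergent subsequence, identify the weak limit as an immersion via the nondegeneracy condition~\eqref{imm, new}, and then use the main Theorem~\ref{thm: main, new} together with a compensated-compactness / weak-continuity argument to pass the Gauss--Codazzi--Ricci data to the limit. First I would observe that uniform boundedness of $\{\iota^\e\}$ in $L^{p,n-p}_{2,w,\loc}$ gives, by reflexivity of weak Morrey spaces (equivalently, after the embedding $L^{p,n-p}_w \emb L^2$ from Lemma~\ref{weak Morrey embedding}, weak $W^{2,2}_\loc$-compactness) a subsequence converging weakly in $L^{p,n-p}_{2,w,\loc}$ to some $\overline{\iota}$, with $d\iota^\e \weak d\overline{\iota}$ in $L^{p,n-p}_{1,w,\loc}$ and, by the compact Rellich-type embedding $L^{p,n-p}_{1,w,\loc}\emb L^2_\loc$ (valid since $p>2$), strong $L^2_\loc$-convergence of the first derivatives. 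This strong convergence of $d\iota^\e$ is exactly what upgrades the weak limit of the bilinear expression $g^\e=d\iota^\e\otimes d\iota^\e$ to the pointwise product $d\overline{\iota}\otimes d\overline{\iota}=\overline{\iota}^\#\delta$; condition~\eqref{imm, new} then passes to the limit a.e., so $\overline{\iota}$ is an immersion with nondegenerate induced metric and $\overline{\iota}^\#\delta$ is a limiting point of $g^\e$ in $L^{p,n-p}_{1,w,\loc}$.

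Next I would address the extrinsic geometry. Each $\iota^\e$ has a well-defined second fundamental form $\two^\e$ and normal connection $\na^{E,\e}$, assembled into the connection $1$-form $\Omega^\e$ of \eqref{schematic, Omega}, which is uniformly bounded in $L^{p,n-p}_{w,\loc}$ (the off-diagonal blocks are $\two^\e$, controlled by $\nabla^2\iota^\e$ divided by the lower eigenvalue bound in \eqref{imm, new}, and the diagonal blocks are Christoffel-type terms in $g^\e$, controlled by $\nabla g^\e \in L^{p,n-p}_{w,\loc}$ together with the two-sided bounds). Hence, modulo a further subsequence, $\Omega^\e \weak \overline{\Omega}$ in $L^{p,n-p}_{w,\loc}$, and $\mathfrak{S}^\e:=(\two^\e,\na^{E,\e})\weak \overline{\mathfrak{S}}$ in the same topology. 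The point is to show that $\overline{\mathfrak{S}}$ is a weak solution of the Gauss--Codazzi--Ricci equations for the limiting metric $\overline{g}=\overline{\iota}^\#\delta$, \emph{i.e.}\ that $\overline{\Omega}$ is flat: $d\overline{\Omega}+\overline{\Omega}\wedge\overline{\Omega}=0$. The linear term $d\Omega^\e\weak d\overline{\Omega}$ passes trivially; the quadratic term $\Omega^\e\wedge\Omega^\e$ is the delicate one, and here I would invoke the div-curl / compensated-compactness structure exactly as in \cite{csw} (Chen--Slemrod--Wang) or, in the present Morrey framework, the wedge-product theorem of \cite{clms, rrt} cited in Step~3 of the proof of Theorem~\ref{thm: main, new}: each $\Omega^\e$ lies in $L^2_\loc$ with $d\Omega^\e = -\Omega^\e\wedge\Omega^\e$ bounded in $L^1_\loc$, which is enough to deduce weak continuity of the wedge products $\Omega^\e\wedge\Omega^\e \weak \overline{\Omega}\wedge\overline{\Omega}$ tested against smooth forms. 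Therefore $\overline{\Omega}$ is flat, so $\overline{\mathfrak{S}}$ solves Gauss--Codazzi--Ricci with respect to $\overline{g}$.

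Finally I would apply Theorem~\ref{thm: main, new} to $(\M,\overline{g})$ and $\overline{\mathfrak{S}}\in L^{p,n-p}_{w,\loc}$ to obtain an isometric immersion realising $\overline{\mathfrak{S}}$; by the uniqueness clause (modulo Euclidean rigid motions) and the fact that $\overline{\iota}$ already induces $\overline{g}$ with $\nabla^2\overline{\iota}$ giving the same extrinsic data, this isometric immersion coincides with $\overline{\iota}$ up to a rigid motion, confirming that $\overline{\iota}$ is an immersion whose induced and extrinsic geometries are the asserted limiting points. The main obstacle I anticipate is the weak continuity of the quadratic term $\Omega^\e\wedge\Omega^\e$ in the \emph{Morrey} topology rather than in $L^2$: one must verify that the compensated-compactness estimate behind \eqref{comp comp, new} is stable under the weak-$\ast$ convergence of $\Omega^\e$ and survives localisation (cut-off functions interact with the Hardy-space estimate), and that the lower bound in \eqref{imm, new} is genuinely needed to prevent the induced metrics from degenerating in the limit — without it the limiting "immersion" could collapse and $\overline{\Omega}$ would not be controlled. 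This is precisely the content of Remark~\ref{rem}, and handling the cut-offs carefully, as in \cite[Theorem~4]{l}, is the technical crux.
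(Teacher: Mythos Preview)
Your overall strategy is sound and would work, but it follows a genuinely different route from the paper's. You propose to pass the \emph{quadratic} second structural equation $d\Omega^\e+\Omega^\e\wedge\Omega^\e=0$ to the limit by compensated compactness \`a la \cite{csw}, and then invoke Theorem~\ref{thm: main, new} together with its uniqueness clause to recognise the limiting immersion. The paper instead \emph{linearises} the passage to the limit by bringing in, for each $\e$, the Coulomb--Uhlenbeck gauge $P^\e$ of Lemma~\ref{lem: riviere}: by~\eqref{zz, new} the gradients $dP^\e$ are uniformly bounded in $L^{q,n-q}\hookrightarrow L^2_\loc$, and since $P^\e$ takes values in the compact group $SO(n+k)$ one gets (after a subsequence) $P^\e\weak\overline{P}$ in $W^{1,2}_\loc$, hence $P^\e\to\overline{P}$ \emph{strongly} in $L^2_\loc$ by Rellich--Kondrachov. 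Step~1 of the proof of Theorem~\ref{thm: main, new} supplies the Pfaff equation $dP^\e+\Omega^\e P^\e=0$ for each $\e$; the product $\Omega^\e P^\e$ is now (weak)$\times$(strong) and passes to the limit with no compensated compactness whatsoever, yielding $d\overline{P}+\overline{\Omega}\,\overline{P}=0$, after which one solves the Poincar\'e system to produce $\overline{\iota}$. In short, the paper uses the gauge precisely to sidestep the obstacle you identify --- weak continuity of $\Omega^\e\wedge\Omega^\e$ --- by trading a quadratic-in-$\Omega^\e$ equation for one that is linear in the strongly compact $P^\e$.

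A minor caveat on your route: the bare claim that ``$d\Omega^\e$ bounded in $L^1_\loc$ is enough'' for the div-curl step is not sufficient in dimensions $n\ge 3$; an $L^1$ bound alone does not place $d\Omega^\e$ in a compact subset of $H^{-1}_\loc$. What rescues your argument is the stronger information $\Omega^\e\in L^{p,n-p}_{w,\loc}\subset L^q_\loc$ for any $2<q<p$, so that $d\Omega^\e=-\Omega^\e\wedge\Omega^\e\in L^{q/2}_\loc$ with $q/2>1$, at which point \cite{csw} applies verbatim. With this adjustment your approach is valid; the paper's gauge-based argument simply buys a shorter proof that reuses the machinery already built for Theorem~\ref{thm: main, new}.
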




\begin{proof}[Proof of Theorem~\ref{thm: weak rigidity}]

For each $\e>0$, let $\Omega^\e$ be the connection $1$-form of the Cartan formalism for Euclidean submanifolds corresponding to $\iota^\e$ defined via Eqs.~\eqref{Omega 1}--\eqref{Omega 3}. Here,  by construction, 
\begin{align*}
\iota^\e : \left(\M, g^\e\right) \longrightarrow \left(\R^{n+k},\delta\right)
\end{align*}
are isometric immersions. As in \eqref{schematic, Omega} we write schematically
$$\Omega^\e = \begin{bmatrix}
\na^{g^\e}&\two^\e\\
-\left(\two^\e\right)^\top&\na^{\perp,\e}
\end{bmatrix},$$
where $\na^{g^\e}$, $\two^\e$, and $\na^{\perp,\e}$ are the coefficients of the Levi-Civita connection of $g^\e$, the second fundamental form of $\iota^\e$, and the normal connection of $\iota^\e$, respectively.

In view of the definition of $\Omega^\e$ and the $L^\infty$-boundedness for $g^\e$, it holds that  $\Omega^\e \weak \overline{\Omega}$ weakly in $L^{p,n-p}_{w,\loc}$, after passing to subsequences if necessary. From Lemma~\ref{lem: riviere},  \eqref{zz, new} and  Lemma~\ref{weak Morrey embedding}, one infers that $\left\{dP^\e\right\}$ is bounded in $L^2_\loc$, where $P^\e$ is the  Coulomb--Uhlenbeck gauge  associated to $\Omega^\e$. As $P^\e$ takes values in the compact group $SO(n+k)$, on a local chart $U \subset \M$, we have the weak convergence $P^\e \weak \overline{P}$ in $W^{1,2}_\loc$ for some $\overline{P} \in L_{1,w,\loc}^{p,n-p}(U; SO(n+k))$. By the Rellich--Kondrachov Lemma, we thus have 
\begin{equation}\label{strong conv, L^2}
\text{$P^\e \longrightarrow \overline{P}$ strongly in $L^2_\loc$}.
\end{equation}

In addition, by virtue of Theorem~\ref{thm: main, new} (especially Step~1 of the proof) and passing to a smaller chart $U \subset \M$ if necessary, we obtain the Pfaff system: $$d P^\e + \Omega^\e P^\e = 0.$$ 
Then, thanks to the strong convergence in \eqref{strong conv, L^2}, we can  pass to the limits to deduce that $$d\overline{P} + \overline{\Omega}\, \overline{P} = 0$$ in the sense of distributions.

Now, thanks to the proof of Theorem~\ref{thm: main, new} (see Step~1 therein, via the Poincar\'{e} system~\eqref{poincare, new}), the existence of an $L^{p,n-p}_{2,w,\loc}$-isometric immersion  $\overline{\iota}: \left(U, g^\e\right) \to \left(\R^{n+k},\delta\right)$ can be deduced from the solubility of $d\overline{P} + \overline{\Omega}\, \overline{P} = 0$. By the uniqueness of distributional limits, we have that $\iota^\e \weak \overline{\iota}$ weakly in $L^{p,n-p}_{2,w,\loc}$ modulo subsequences. The convergence of  metrics and extrinsic geometries follows from the previous arguments.   \end{proof}

\begin{remark}\label{rem}
The nondegeneracy assumption, \emph{i.e.}, the existence of $c>0$ in \eqref{imm, new}, is necessary for Theorem~\ref{thm: weak rigidity}. It ensures that the limits of $\iota^\e$ do not pinch to non-immersions. See Langer \cite[p.227]{langer} and Li \cite[\S 2]{li} for examples of such pinching behaviours of smooth immersed hypersurfaces $\M^n \emb \R^{n+1}$ ($n\geq 2$) with uniformly $L^n$-bounded second fundamental forms.
\end{remark}

\begin{remark}
Analogues of Theorems~\ref{thm: main, new} and \ref{thm: weak rigidity} in this paper  for isometric immersions of semi-Riemannian manifolds (with fixed signatures) are expected to be valid. See \cite{cl+} for the semi-Riemannian version of the fundamental theorem of submanifold theory and the weak compactness result for $W^{2,p}_\loc$-isometric immersions of $\M^n$ into semi-Euclidean spaces; $p>n$.
\end{remark}

\bigskip
\noindent
{\bf Acknowledgement}. SL thanks Armin Schikorra and Reza Pakzad for many insightful discussions on Gauss--Codazzi--Ricci equations, isometric immersions, and compensated compactness. Both authors are indebted to Yuning Liu for organising the working seminar on harmonic maps at NYU-Shanghai in Fall 2023, and for giving nice talks on  \cite{rs}.

The research of SL is supported by NSFC Projects 12201399 and 12331008, Young Elite Scientists Sponsorship Program by CAST 2023QNRC001, National Key Research $\&$ Development Program $\#$SQ2023YFA1000024, and  Shanghai Frontier Research Institute for Modern Analysis.

\bigskip
\noindent
{\bf Competing Interests Statement}. We declare that there is no conflict of interests involved.


\begin{thebibliography}{99}
\bibitem{adams}
D.~R. Adams, \textit{Morrey Spaces}. Lecture Notes in Applied and Numerical Harmonic Analysis, xv+121 pp.
Birkhäuser, Cham (2015)

\bibitem{antman}
S.~S. Antman, Ordinary differential equations of nonlinear elasticity I: Foundations of the theories of non-linearly elastic rods and shells, \textit{Arch. Ration. Mech. Anal.} \textbf{61} (1976) 307--351



\bibitem{b}
J.~A. Blume, Compatibility conditions for a left Cauchy--Green strain field, \textit{J. Elasticity} \textbf{21} (1989),  271--308

\bibitem{camp}
S. Campanato, Proprietà di hölderianità di alcune classi di funzioni, \textit{Ann. Scuola Norm. Sup. Pisa} \textbf{17} (1963), 175--188

\bibitem{add}
Y. Choquet-Bruhat, C. Dewitt-Morette, and M. Dillard-Bleick, \textit{Analysis, Manifolds and Physics}, North-Holland, Amsterdam, 1977


\bibitem{cgm}
P.~G. Ciarlet, L. Gratie, and C. Mardare, {A new approach to the fundamental theorem of surface theory}, \textit{Arch. Ration. Mech. Anal.} \textbf{188} (2008), 457--473





\bibitem{cl}
G.-Q. G. Chen and S. Li, {Global weak rigidity of the Gauss--Codazzi--Ricci equations and isometric immersions of Riemannian manifolds with lower regularity}, \textit{J. Geom. Anal.} \textbf{28} (2018), 1957--2007


\bibitem{cl+}
G.-Q. G. Chen and S. Li, Weak continuity of the Cartan structural system and compensated compactness on semi-Riemannian manifolds with lower regularity, \textit{Arch. Ration. Mech. Anal.} \textbf{241} (2021), 579--641


\bibitem{csw}
G.-Q. G. Chen, M. Slemrod, and D. Wang, {Weak continuity of the Gauss--Codazzi--Ricci system for isometric embedding}, \textit{Proc. Amer. Math. Soc.} \textbf{138} (2010),  1843--1852


\bibitem{cjwz}
Q. Chen, J. Jost, G. Wang, and M. Zhu, {The boundary value problem for Dirac-harmonic maps}, \textit{J. Eur. Math. Soc. (JEMS)} \textbf{15} (2013),  997--1031

\bibitem{cbook}
P.~G. Ciarlet, \textit{Mathematical Elasticity, Three–Dimensional Elasticity},  vol.1, North-Holland, Amsterdam, 1988


\bibitem{cbook2}
P.~G. Ciarlet, \textit{An Introduction to Differential Geometry with Applications to Elasticity}, Springer, Dordrecht, 2005


\bibitem{ciarlet'}
P.~G. Ciarlet and F. Larsonneur, On the recovery of a surface with prescribed first and second
fundamental forms, \textit{J. Math. Pures Appl.} \textbf{81} (2002), 167--185

\bibitem{cm}
P.~G. Ciarlet and C.  Mardare, {A surface in $W^{2,p}$ is a locally Lipschitz-continuous function of its fundamental forms in $W^{1,p}$ and $L^p$, $p>2$}, \textit{J. Math. Pures Appl.} \textbf{124} (2019), 300--318



\bibitem{cg}
G.-Q. G. Chen and T.~P. Giron, Weak continuity of curvature for connections in $L^p$, \textit{ArXiv Preprint} (2021), ArXiv: 2108.13529

\bibitem{chern}
S.~S. Chern, W.~H. Chen, and K.~S. Lam, \textit{Lectures on Differential Geometry}. Series on University Mathematics, 1. World Scientific Publishing Co., Inc., River Edge, NJ, 1999


\bibitem{clelland}
J.~N. Clelland, \textit{From Frenet to Cartan: The Method of Moving Frames}. Graduate Studies in Mathematics, vol. 178. American Mathematical Society, Providence (2017)

\bibitem{clms}
R. Coifman, P.-L. Lions, Y. Meyer, and S. Semmes, Compensated compactness and Hardy spaces, \textit{J. Math. Pures Appl.} \textbf{72} (1993), 247--286

\bibitem{d}
M.~P. do Carmo, \textit{Riemannian Geometry}. Translated from the second Portuguese edition by Francis Flaherty. Mathematics: Theory $\&$ Applications. Birkh\"{a}user Boston, Inc., Boston, MA, 1992

\bibitem{ds}
C. De Lellis and L. Sz\'{e}kelyhidi Jr., {High dimensionality and $h$-principle in PDE}, \textit{Bull. Amer. Math. Soc. (N.S.)} \textbf{54} (2017), 247--282



\bibitem{Giaquinta}
M. Giaquinta, \textit{Multiple integrals in the calculus of variations and nonlinear elliptic systems}, Ann. of Math. Stud., 105. Princeton University Press, Princeton, NJ, 1983


\bibitem{giron}
T.~P. Giron, \textit{On the Analysis of Isometric Immersions of Riemannian Manifolds}, D.Phil. Thesis, University of Oxford, 2021

\bibitem{Grafakos}
L. Grafakos, \textit{Classical Fourier Analysis}, Springer, New York, 2009




\bibitem{Gunawan}
H. Gunawan, D. I. Hakim, E. Nakai, and Y. Sawano, { On inclusion relation between weak Morrey spaces and Morrey spaces}, \textit{Nonlinear Anal.} \textbf{168} (2018), 27--31

\bibitem{gunther}
M. G\"{u}nther, {Isometric embeddings of Riemannian manifolds}. Proceedings of the International Congress of Mathematicians, Vol. I, II (Kyoto, 1990), 1137--1143, Math. Soc. Japan, Tokyo, 1991


\bibitem{guo2}
C.-Y. Guo, M.-L. Liu, and C.-L. Xiang, $L^p$-regularity of a geometrically nonlinear flat Cosserat micropolar model in supercritical dimensions, \textit{ArXiv Preprint}: 2404.16284 





\bibitem{hw}
P. Hartman and A. Wintner, {On the fundamental equations of differential geometry}, \textit{Amer. J. Math.} \textbf{72} (1950), 757--774

\bibitem{h}
F. H\'{e}lein, \textit{Harmonic Maps, Conservation Laws and Moving Frames}. Translated from the 1996 French original. With a foreword by James Eells. Second edition. Cambridge Tracts in Mathematics, 150. Cambridge University Press, Cambridge, 2002





\bibitem{langer}
J. Langer, {A compactness theorem for surfaces with $L_p$-bounded second fundamental form}, \textit{Math. Ann.} \textbf{270} (1985), 223--234

\bibitem{li}
S. Li, {A remark on the non-compactness of $W^{2,d}$-immersions of $d$-dimensional hypersurfaces}, \textit{Proc. Amer. Math. Soc.} \textbf{148}  (2020),  2245--2255

\bibitem{li-new}
S. Li, Smoothability of $L^p$-connections on bundles and isometric immersions with $W^{2,p}$-regularity, \textit{ArXiv Preprint} (2020): 2010.13560


\bibitem{l}
F. Litzinger, {Optimal regularity for two-dimensional Pfaffian systems and the fundamental theorem of surface theory}, \textit{J. Geom. Anal.} \textbf{31} (2021), 2594--2610



\bibitem{add'}
P. Malliavin, \textit{Géométrie Differentielle Intrinsèque}, Hermann, Paris, 1972


\bibitem{m1}
S. Mardare, {The fundamental theorem of surface theory for surfaces with little regularity}, \textit{J. Elasticity} \textbf{73} (2003), 251--290


\bibitem{m2}
S. Mardare, {On Pfaff systems with $L^p$ coefficients and their applications in differential geometry}, \textit{J. Math. Pures Appl.} \textbf{84} (2005), 1659--1692



\bibitem{m3}
S. Mardare, {On systems of first order linear partial differential equations with $L^p$ coefficients}, \textit{Adv. Differential Equations} \textbf{12} (2007), 301--360

\bibitem{m+}
S. Mardare, On Poincaré and de Rham's theorems, \textit{Rev. Roum. Math. Pures Appl.} \textbf{53} (2008), 523--541

\bibitem{morrey}
C.~B. Morrey, \textit{The problem of Plateau on a Riemannian manifold}, \textit{Ann. of Math.} \textbf{49} (1948), 807--851


\bibitem{murat}
F. Murat, {Compacit\'{e} par compensation}, \textit{Ann. Scuola Norm. Sup. Pisa Cl. Sci.} \textbf{5} (1978),  489--507

\bibitem{n1}
J. Nash, {$C^1$ isometric imbeddings}, \textit{Ann. of Math.}  \textbf{60} (1954), 383--396


\bibitem{r}
T. Rivi\`{e}re, {Conservation laws for conformally invariant variational problems}, \textit{Invent. Math.} \textbf{168} (2007), 1--22

\bibitem{rs}
T. Rivi\`{e}re and M. Struwe, Partial regularity for harmonic maps and related problems, \textit{Comm. Pure Appl. Math.}  \textbf{61} (2008), 451--463

\bibitem{rrt}
J.~W. Robbin, R.~C. Rogers, and B. Temple, {On weak continuity and the Hodge decomposition}, \textit{Trans. Amer. Math. Soc.} \textbf{303} (1987),  609--618

\bibitem{s}
A. Schikorra, {A remark on gauge transformations and the moving frame method}, \textit{Ann. Inst. H. Poincar\'{e} Anal. Nonlin\'{e}aire} \textbf{27} (2010), 503--515


\bibitem{sch}
G. Schwarz, \textit{Hodge decomposition---a method for solving boundary value problems}. Lecture Notes in Mathematics, 1607. Springer-Verlag, Berlin, 1995

\bibitem{sp}
M. Spivak, \textit{A Comprehensive Introduction to Differential Geometry}, vol. I--II. Perish, Inc., Boston
(1970) (vol. III--V (1975))

\bibitem{stein}
E.~M. Stein, \textit{Harmonic analysis: real-variable methods, orthogonality, and oscillatory integrals}, Princeton Math. Ser., 43 Monogr. Harmon. Anal., III. Princeton University Press, Princeton, NJ, 1993

\bibitem{sz}
M. Szopos, {An existence and uniqueness result for isometric immersions with little regularity}, \textit{Rev. Roumaine Math. Pures Appl.} \textbf{53} (2008), 555--565


\bibitem{t1}
L. Tartar, Compensated compactness and applications to partial differential equations, \textit{Nonlinear Analysis and Mechanics: Heriot-Watt Symposium} 4 (R. J. Knops, ed.), Research Notes in Math., Pitman, 1979


\bibitem{t2}
L. Tartar, The compensated compactness method applied to systems of conservation laws, \textit{Systems of Nonlinear Partial Differential Equations} (J. M. Ball, ed.), NATO ASI Series, Reidel, 1983

\bibitem{ten}
K. Tenenblat, {On isometric immersions of Riemannian manifolds}, \textit{Bol. Soc. Brasil. Mat.} \textbf{2} (1971), 23--36

\bibitem{u}
K.~K. Uhlenbeck, {Connections with $L^p$ bounds on curvature}, \textit{Comm. Math. Phys.} \textbf{83} (1982), 31--42




\bibitem{wbook}
K. Wehrheim, \textit{Uhlenbeck Compactness}, EMS Series of Lectures in Mathematics. European Mathematical Society (EMS), Z\"{u}rich, 2004

\bibitem{w}
H.~C. Wente, {The differential equation $\Delta x = 2H(x_u \wedge x_\nu)$ with vanishing boundary values}, \textit{Proc. Amer. Math. Soc.} \textbf{50} (1975), 131--137


\bibitem{yau}
S.-T. Yau, Problem section, pp. 669--706 in: Seminar on Differential Geometry, vol. 102 of
\textit{Ann. of Math. Stud.}, Princeton Univ. Press, Princeton, N.J., 1982

\end{thebibliography}
\end{document}